\newcommand{\sh}{\operatorname{sh}}
\newcommand{\bigsh}{\operatorname{SH}}
\newcommand{\orho}{\bar{\rho}}
\newcommand{\ow}{\bar{w}}
\newcommand{\pure}{\mathcal{S}}
\newcommand{\gen}{\mathcal{GS}}
\newcommand{\Sym}{\operatorname{Sym}}
\newcommand{\sgn}{\operatorname{sgn}}
\newcommand{\Conf}{\tau}
\newcommand{\sixv}{\mathcal{SV}}
\newcommand{\midedge}{\mathbb{M}}
\newcommand{\tor}{\mathbb{T}}
\numberwithin{equation}{section}
\definecolor{darkred}{RGB}{190,20,20}
\tikzset{midarrow1/.style={postaction={decorate},decoration={markings, mark=at position 0.67 with {\arrow{Stealth}}}}}
\tikzset{midarrow2/.style={thick,postaction={decorate},decoration={markings, mark=at position 0.62 with {\arrow{Stealth}}}}}
\tikzset{endarrow/.style={thick,->,>={Stealth[length=2.4mm, width=1.8mm]}}}
\tikzset{startarrow/.style={thick,<-,>={Stealth[reversed, length=2.4mm, width=1.8mm]}}}
\newtheorem{thm}{Theorem}[section]
\newtheorem{cor}[thm]{Corollary}
\newtheorem{lemma}[thm]{Lemma}
\newtheorem{df}[thm]{Definition}
\newtheorem{proposition}[thm]{Proposition}
\newtheorem{rem}{Remark}
\title[Explicit Correlation Functions in the Free-Fermion Regime]{Explicit correlation functions for the six-vertex model in the free-fermion regime}
\author{Samuel G. G. Johnston and Rohan Shiatis}
\begin{document}

\begin{abstract}

In this article, we show that, in the free-fermion regime of the six-vertex model, all $k$-point correlation functions of vertex types admit a determinantal representation:
\begin{align*}
\mathbb{P}\Bigg( \bigcap_{p=1}^k \{ \text{vertex at } v^p \text{ has type } t_p \} \Bigg)
= \left( \prod_{p=1}^k a_{t_p} \right)
   \det\big[ L(x^i,y^j) \big]_{i,j=1}^{2k},
\end{align*}
where $t_1,\ldots,t_k \in \{1,\ldots,6\}$ label the six possible vertex types, and
$\{a_t : 1 \leq t \leq 6\}$ are the corresponding six-vertex weights. For each
$1 \leq p \leq k$, the four points
$x^{2p-1}, x^{2p}, y^{2p-1}, y^{2p} \in (\mathbb{Z}/2)^2$ are $t_p$-dependent choices among the
midpoints of the edges incident to $v^p$. The correlation kernel $L$ has the contour integral representation
\begin{align*}
L(x,y)
= \oint_{|w_1|=1} \oint_{|w_2|=1}
   \frac{dw_1}{2\pi i\, w_1}\,
   \frac{dw_2}{2\pi i\, w_2}\,
   w_1^{\,y_1 - x_1}\, w_2^{\,y_2 - x_2}\,
   h\big(c(x),c(y);w_1,w_2\big),
\end{align*}
where $h\big(c(x),c(y);w_1,w_2\big)$ is a simple rational function of $(w_1,w_2)$ that depends on $x$ and $y$ only through their orientations $c(x)$ and $c(y)$. Our proof is fully self-contained: we construct a determinantal point process on $\mathbb{Z}^2$ and identify the six-vertex model as its pushforward under an explicit mapping.

\end{abstract}
\maketitle

\section{Introduction and Overview}

\subsection{The six-vertex model}

The six-vertex model (often abbreviated 6V) is a paradigmatic two‐dimensional lattice model in statistical mechanics, originally introduced in the context of the residual entropy of ice in Pauling’s work \cite{Pauling1935}. In its modern formulation, it consists of configurations of edge orientations on a square lattice (or on a torus), subject to the so-called \emph{ice rule}. Namely, there must be two arrows in and two arrows out at each vertex, which yields six allowed local vertex types, from which the model takes its name \cite{Lieb1967}. 

\pgfmathsetmacro{\len}{0.8}
\newcommand{\ai}[1][(0,0)]{
\begin{tikzpicture}[baseline=(current bounding box.center)]
\begin{scope}[shift={#1}]
    \draw[thick, midarrow1] (-\len,0) -- (0,0);
    \draw[thick, midarrow1] (0,0) -- (\len,0);
    \draw[thick, midarrow1] (0,-\len) -- (0,0);
    \draw[thick, midarrow1] (0,0) -- (0,\len);
    \fill (0,0) circle (0.06cm);
\end{scope}
\end{tikzpicture}
}
\newcommand{\aii}[1][(0,0)]{
\begin{tikzpicture}[baseline=(current bounding box.center)]
\begin{scope}[shift={#1}]
    \draw[thick, midarrow1] (0,0) -- (-\len,0);
    \draw[thick, midarrow1] (\len,0) -- (0,0);
    \draw[thick, midarrow1] (0,0) -- (0,-\len);
    \draw[thick, midarrow1] (0,\len) -- (0,0);
    \fill (0,0) circle (0.06cm);
\end{scope}
\end{tikzpicture}
}
\newcommand{\bi}[1][(0,0)]{
\begin{tikzpicture}[baseline=(current bounding box.center)]
\begin{scope}[shift={#1}]
    \draw[thick, midarrow1] (-\len,0) -- (0,0);
    \draw[thick, midarrow1] (0,0) -- (\len,0);
    \draw[thick, midarrow1] (0,0) -- (0,-\len);
    \draw[thick, midarrow1] (0,\len) -- (0,0);
    \fill (0,0) circle (0.06cm);
    \end{scope}
\end{tikzpicture}
}
\newcommand{\bii}[1][(0,0)]{
\begin{tikzpicture}[baseline=(current bounding box.center)]
\begin{scope}[shift={#1}]
    \draw[thick, midarrow1] (0,0) -- (-\len,0);
    \draw[thick, midarrow1] (\len,0) -- (0,0);
    \draw[thick, midarrow1] (0,-\len) -- (0,0);
    \draw[thick, midarrow1] (0,0) -- (0,\len);
    \fill (0,0) circle (0.06cm);
\end{scope}
\end{tikzpicture}
}
\newcommand{\ci}[1][(0,0)]{
\begin{tikzpicture}[baseline=(current bounding box.center)]
\begin{scope}[shift={#1}]
    \draw[thick, midarrow1] (-\len,0) -- (0,0);
    \draw[thick, midarrow1] (\len,0) -- (0,0);
    \draw[thick, midarrow1] (0,0) -- (0,-\len);
    \draw[thick, midarrow1] (0,0) -- (0,\len);
    \fill (0,0) circle (0.06cm);
\end{scope}
\end{tikzpicture}
}
\newcommand{\cii}[1][{(0,0)}]{
\begin{tikzpicture}[baseline=(current bounding box.center)]
\begin{scope}[shift={#1}]
    \draw[thick, midarrow1] (0,0) -- (-\len,0);
    \draw[thick, midarrow1] (0,0) -- (\len,0);
    \draw[thick, midarrow1] (0,-\len) -- (0,0);
    \draw[thick, midarrow1] (0,\len) -- (0,0);
    \fill (0,0) circle (0.06cm);
\end{scope}
\end{tikzpicture}
}

{
\renewcommand{\arraystretch}{2}
\setlength{\tabcolsep}{10pt}
\begin{figure}[!htb]
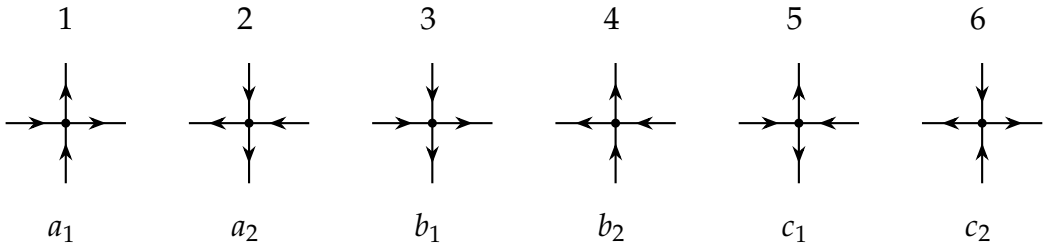

\begin{tabular}{c c c c c c}\label{tab:6varrows}
1&2&3&4&5&6\\[4pt]
\ai & \aii & \bi & \bii & \ci & \cii \\
$a_1$ & $a_2$ & $b_1$ & $b_2$ & $c_1$ & $c_2$ \\
\end{tabular}
\caption{All possible arrow patterns.}
\label{fig:arrowpatterns}
\end{figure}

}
\vspace{3pt}

We consider the six-vertex model on a graph with vertex set
either $V = \mathbb{Z}^2$ or the discrete torus $V = \mathbb{T}_n
= \{0,1,\ldots,n-1\}^2$. The edge set $E$ consists of pairs
$\{x,y\}$ of neighbouring vertices (with adjacency on
$\mathbb{T}_n$ taken modulo $n$ in each coordinate). A
six-vertex configuration is an orientation of the edges such
that every vertex has indegree and outdegree equal to two.
We represent such a configuration by drawing an arrow on
each edge. See Figure~\ref{fig:validconfig}, which illustrates
a six-vertex configuration on $\mathbb{T}_n$ in the case
$n = 4$.
The six possibilities for the orientation of the edges incident to a given vertex, which we associate with six vertex types in $\{1,\ldots,6\}$, are depicted in Figure~\ref{fig:arrowpatterns}. We also assign weights 
\begin{equation*}
(a_1,a_2,b_1,b_2,c_1,c_2) \in (0,\infty)^6,
\end{equation*}
in correspondence with the order in which these vertices appear in Figure \ref{fig:arrowpatterns}.

\begin{figure}[!htb]
    \centering
        \begin{tikzpicture}[scale=1]

        \def\n{4}
        \pgfmathtruncatemacro{\Nminusone}{\n-1}
        
        \foreach \i in {0,...,\Nminusone}{
            \foreach \j in {0,...,\Nminusone}{
                \fill (\i,\j) circle (0.06cm);
            }
        }

        \begin{scope}[on background layer]
            \draw[gray, dashed] (-0.5,-0.5) grid (\n - 0.5,\n - 0.5);
        \end{scope}

        \def\uparrows{+,-,-,+,-,+,-,+,+,+,-,-,-,+,-,+}
        
        \foreach [count=\i from 0] \a in \uparrows {
            \pgfmathtruncatemacro{\x}{mod(\i,\n)}
            \pgfmathtruncatemacro{\y}{int(\i/\n)}

            \ifthenelse{\equal{\y}{\Nminusone}}{
            \ifthenelse{\equal{\a}{+}}{
                \draw[endarrow] (\x,\n-1) --+ (0,0.62);
                \draw[startarrow] (\x,-0.62) --+ (0,0.62);
                }{
                \draw[startarrow] (\x,\n-0.38) --+ (0,-0.62);
                \draw[endarrow] (\x,0) --+ (0,-0.62);
                }
            }{
            \ifthenelse{\equal{\a}{+}}{
                \draw[midarrow2] (\x,\y) --+ (0,1);
                }{
                \draw[midarrow2] (\x,\y+1) --+ (0,-1);
                }
            }
            }

        \def\hrzarrows{-,+,+,+,+,-,-,-,-,-,-,+,+,+,+,-}
        
        \foreach [count=\i from 0] \a in \hrzarrows {
            \pgfmathtruncatemacro{\x}{mod(\i,\n)}
            \pgfmathtruncatemacro{\y}{int(\i/\n)}

            \ifthenelse{\equal{\x}{\Nminusone}}{
            \ifthenelse{\equal{\a}{+}}{
                \draw[endarrow] (\n-1,\y) --+ (0.62,0);
                \draw[startarrow] (-0.62,\y) --+ (0.62,0);
                }{
                \draw[startarrow] (\n-0.38,\y) --+ (-0.62,0);
                \draw[endarrow] (0,\y) --+ (-0.62,0);
                }
            }{
            \ifthenelse{\equal{\a}{+}}{
                \draw[midarrow2] (\x,\y) --+ (1,0);
                }{
                \draw[midarrow2] (\x+1,\y) --+ (-1,0);
                }
            }
            }
        
        \end{tikzpicture}
    \caption{A six-vertex configuration on $\tor_4$.}
    \label{fig:validconfig}
\end{figure}

Working with the torus $\mathbb{T}_n$ for the time being, we assign to each six-vertex configuration on $\tor_n$ a weight
\begin{equation}\label{eq:weightsixv}
    w(\sigma) = \prod_{(i,j) \in \mathbb{T}_n} w_{(i,j)}(\sigma),
\end{equation}
where $w_{(i,j)}(\sigma) \in \{a_1,a_2,b_1,b_2,c_1,c_2\}$ is the weight assigned to the local arrow pattern of $\sigma$ at a vertex $(i,j) \in \mathbb{T}_n$. We may 
thereby endow our configuration space with the Boltzmann measure defined by
\begin{equation}\label{eq:boltzmannsixv}
    \mathbb{P}_n(\sigma) = \frac{w(\sigma)}{Z_n}, \qquad Z_n = \sum_{\sigma' \in \sixv_n}w(\sigma'),
\end{equation}
where $\sixv_n$ denotes the set of six-vertex configurations on $\mathbb{T}_n$, that is, the set of all $\sigma$ on $\tor_n$ that obey the ice rule.
The probability measure $\mathbb{P}_n$, which depends on the weights $a_1,a_2,b_1,b_2,c_1,c_2$, then governs random six-vertex configurations on $\mathbb{T}_n$. Note that by construction, this probability measure is invariant under translations of space.

Given a six-vertex configuration $\sigma$ on $\mathbb{T}_n$, we can naturally associate it with a six-vertex configuration on $\mathbb{Z}^2$ by defining the configuration on the subset $\{0,\dots,n-1\} \times \{0,\dots,n-1\}$ and then extending it periodically. From this perspective, $\mathbb{P}_n$ may be regarded as a probability measure supported on $(n,n)$-periodic six-vertex configurations on the lattice $\mathbb{Z}^2$. With this in mind, we may now construct a probability measure $\mathbb{P}$ on six-vertex configurations on the lattice as the weak limit of $\mathbb{P}_n$ as $n \to \infty$. 
We are able to show the convergence of the finite dimensional distributions of 
\begin{align*}
\mathbb{P}_n\left( \bigcap_{i=1}^k \{ \Conf_{v^i}(\sigma) = t_i \} \right) := \mathbb{P}_n\left( \bigcap_{i=1}^k \{ \text{Vertex $v^i$ has type $t_i$}\} \right)
\end{align*}
as $n \to \infty$, and conclude by Kolmogorov's extension theorem that there is a unique
 probability measure $\mathbb{P}$ on lattice six-vertex configurations such that 
\begin{align*}
\mathbb{P} \left( \bigcap_{i=1}^k \{ \Conf_{v^i}(\sigma) = t_i \} \right) = \lim_{n \to \infty}  \mathbb{P}_n\left( \bigcap_{i=1}^k \{ \Conf_{v^i}(\sigma) = t_i \} \right),
\end{align*}
where $v^i \in \mathbb{Z}^2$, $t_i \in \{1,\dots,6\}$ and $\Conf_v(\sigma)$ is the local vertex type at location $v$ in the six-vertex configuration $\sigma$.

In any case, given a collection of parameters $a_1,\dots,c_2$, there are two fundamental questions to tackle:

\begin{itemize}
\item \textbf{Partition functions.} Can we calculate the partition function $Z_n$ for the six-vertex model on the torus and the associated free energy $
F(a_1,a_2,b_1,b_2,c_1,c_2) := \lim_{n \to \infty} \frac{1}{n^2} \log Z_n$? 
\item \textbf{Correlation functions.} Can we obtain an explicit understanding of the joint correlation functions $\mathbb{P} \left( \bigcap_{i=1}^k \{ \Conf_{v^i}(\sigma) = t_i \} \right)$? 
\end{itemize}

To describe the behaviour of six-vertex configurations at a qualitative level, it is useful to introduce the anisotropy parameter,
\begin{align} \label{eq:anis}
\triangle := \frac{a_1a_2  + b_1b_2 - c_1 c_2}{ 2 \sqrt{a_1a_2b_1b_2} }.
\end{align}
The possible values of the anisotropy parameter partition the model into three qualitatively distinct regimes. For $\triangle > 1$, the six--vertex model is in a \emph{ferroelectric} regime with frozen, ordered configurations; 
for $\lvert \triangle \rvert < 1$, it lies in the \emph{disordered (critical)} regime with algebraically decaying correlations; 
and for $\triangle < -1$, it exhibits \emph{antiferroelectric} order with a staggered, oscillatory structure.
We are particularly interested in the special subcase of the disordered regime 
\begin{align*}
\triangle = 0,
\end{align*}
which is known as the \emph{free-fermion} regime.

Shortly, we will state our main result, Theorem \ref{thm:main}, which provides a simple and explicit formula for the $k$-fold correlation functions $\mathbb{P}( \cap_{i=1}^k \{ \Conf_{v^i}(\sigma) = t_i \} )$ of the six-vertex model on $\mathbb{Z}^2$ in the free-fermion regime. Before this, however, let us review some related literature.

\subsection{Related work}

The foundational breakthrough in the exact solution of the six-vertex model was due to Lieb \cite{Lieb1967a,Lieb1967b}, who computed the free energy of the six-vertex model using a transfer-matrix method that involved the Bethe ansatz; see also Sutherland \cite{Sutherland1967}. A unified treatment of transfer-matrix methods can be found in Baxter's monograph \cite{baxter} on exactly solvable models.

In recent decades, researchers have focused on boundary condition effects and developed connections with combinatorics and random tilings. Reshetikhin and Sridhar \cite{Reshetikhin2017} and Aggarwal \cite{aggarwalshape} have discussed limit shapes. Borodin, Corwin, and Gorin \cite{BCG} study a version of the six-vertex model with stochastic weights on each line - see also \cite{BG2} - while Gorin and Nicoletti \cite{GN} study connections with random matrix distributions, proving that the Tracy-Widom distribution arises from the asymptotics of the Izergin-Korepin formula. Lis \cite{lis} studies the delocalisation of the six-vertex model. Duminil-Copin et al.\ \cite{DC} develop new proofs of existence and condensation of Bethe roots for the Bethe ansatz equation to give a short and explicit computation of the partition function in the regime $-1 < \triangle < 1$.  This list is by no means exhaustive. The reader might consult Zinn-Justin's article \cite{ZinnJustin2009} for an excellent review of connections with non-intersecting paths, domino tilings, and determinantal processes.

A distinct strand of work focuses on the free‐fermion regime $\triangle = 0$ of the six‐vertex model and its equivalences with tiling and dimer models. Ferrari and Spohn \cite{FerrariSpohn2006} make explicit the correspondence (under domain‐wall boundary conditions) between the $\triangle=0$ six-vertex model and domino tilings of Aztec‐type regions; see also Section 7 of \cite{EKLP}. More recently, Aggarwal, Borodin, Petrov, and Wheeler \cite{ABPW} introduced a family of symmetric functions obtained as partition functions of six-vertex models, and used the algebraic Bethe ansatz to prove explicit formulas for them; see also Borodin and Petrov \cite{BP}. The resulting models were shown to form a determinantal process with explicit correlations in terms of contour integrals.
Duminil-Copin, Lis and Qian \cite{DCLQ} obtain the arrow-arrow correlations in six-vertex model corresponding to the critical Ising model, using a dimer representation of the model obtained in Boutillier and de Tilière \cite{BdT} (see also \cite{DCLQ2, dubedat}). 
We mention in particular that the six-vertex model is in natural bijection with alternating sign matrices \cite{RobbinsRumseyASM,ZeilbergerASM,KuperbergASM}, and that its asymptotic and probabilistic structure has been the focus of substantial recent progress, see, e.g.,
\cite{AyyerChhitaJohansson2023}.

In recent years, substantial progress has been made in deriving explicit formulas for correlation functions in the six-vertex model. For instance, Bogoliubov, Pronko, and Zvonarev \cite{BPZ} showed that under domain-wall boundary conditions, the boundary one-point functions admit determinantal representations, with the authors obtaining closed‐form expressions in the free-fermion case. More generally, Colomo and Pronko \cite{ColomoPronko12} developed multiple‐integral representations for non‐local correlation functions, such as row‐configuration or emptiness‐formation probabilities, in the finite‐size six-vertex model with domain‐wall boundaries. More recently, Belov and Reshetikhin \cite{BelovReshetikhin20} derived analytic expressions for the two-point height‐function correlations at the free-fermion point and used them to distinguish between the different decay regimes: algebraic decay in the disordered phase and exponential decay in the antiferroelectric phase. Further work in the mathematical physics literature on the correlation functions of six-vertex and XXZ-type models can be found in \cite{K, CJ}, and in particular, the series of papers \cite{grass1, grass2, grass3, grass4, grass5}.
\subsection{Main result}
While there is a rich and expansive literature on the six-vertex model with a host of correlation formulas, to the best of our knowledge, there is no simple formula in the literature for the $k$-fold correlations 
\begin{align*}
\mathbb{P} \left( \bigcap_{i=1}^k \{ \Conf_{v^i}(\sigma) = t_i \} \right)  =: \mathbb{P}\left( \bigcap_{i=1}^k \{ \text{Vertex at $v^i$ has type $t_i$} \} \right) ,
\end{align*}
where $v^1,\ldots,v^k$ are elements of $\mathbb{Z}^2$ and $t_1,\ldots,t_k \in \{1,\ldots,6\}$ are labels for the six types of vertices. 
 The main result of the present article is a simple and explicit formula for such correlations in the free-fermion regime $\triangle = 0$. 

Before stating our main result, let us comment on the parameter space in the discrete setting. It will transpire that, in the free-fermion regime, we can relabel our parameter space in terms of the 3 variables $\alpha$, $\beta$, and $\gamma$, using the correspondence
\begin{align} 
    b_1 &\mapsto \beta, &b_2&\mapsto \alpha, \label{eq:reg1} \\
    c_1 &\mapsto \gamma, &c_2&\mapsto \gamma,\label{eq:reg2}\\
    a_1 &\mapsto 1, &a_2&\mapsto \gamma^2 - \alpha\beta. \label{eq:reg3}
\end{align}
We will justify these restrictions and analyse them in greater detail in Section~\ref{subsec:boltzmanncorr}. The limiting measure $\mathbb{P}$ will therefore also be parametrised by $\alpha$, $\beta$, and $\gamma$.

Let us introduce the set of mid-edges,
\begin{equation*}
    \midedge \coloneqq \left(\mathbb{Z}\times(\mathbb{Z}+\tfrac{1}{2})\right) \sqcup \left((\mathbb{Z}+\tfrac{1}{2})\times\mathbb{Z}\right).
\end{equation*} 
We can associate $\mathbb{M}$ naturally with the set of edges connecting vertices in the lattice.

Let $e^1 = (1,0)$, $e^2 = (0,1)$ represent unit vectors in $\tor_n$ or $\mathbb{Z}^2$. For each vertex $v = (v_1,v_2) \in \mathbb{Z}^2$, there are four mid-edges adjacent to $v$, these are given by 
\begin{align} \label{eq:cardinal}
W_v = v-\tfrac{1}{2} e^1, \quad S_v = v - \tfrac{1}{2}e^2,\quad  E_v = v+\tfrac{1}{2}e^1, \quad  N_v= v  +\tfrac{1}{2}e^2.
\end{align}
We refer to these edges as the edges west, south, east and north of $v$. Of course, a west edge of $v$ is an east edge of the lattice point $v - e^1$, etc.

\pgfmathsetmacro{\len}{0.8}
\pgfmathsetmacro{\size}{0.36}
\pgfmathsetmacro{\base}{-3pt}
\NewDocumentCommand{\crossing}{O{\size}}{
\begin{tikzpicture}[baseline=\base,scale = #1]
\begin{scope}
    \draw[gray, thick, dashed] (-\len/2,0) -- (0,0);
    \draw[gray, thick, dashed] (0,0) -- (\len/2,0);
    \draw[gray, thick, dashed] (0,-\len/2) -- (0,0);
    \draw[gray, thick, dashed] (0,0) -- (0,\len/2);
    \node[circle, fill=black, draw, inner sep=#1 pt] (left) at (-\len/2,0) {};
    \node[circle, fill=black, draw, inner sep=#1 pt] (right) at (\len/2,0) {};
    \node[circle, fill=white, draw, inner sep=#1 pt] (up) at (0,\len/2) {};
    \node[circle, fill=white, draw, inner sep=#1 pt] (down) at (0,-\len/2) {};

    \draw[thick, darkred] (left) -- (right);
    \draw[thick, darkred] (down) -- (up);
\end{scope}
\end{tikzpicture}
}
\NewDocumentCommand{\ainew}{O{\size}}{
\begin{tikzpicture}[baseline=\base,scale = #1]
\begin{scope}
    \draw[gray, thick, dashed] (-\len/2,0) -- (0,0);
    \draw[gray, thick, dashed] (0,0) -- (\len/2,0);
    \draw[gray, thick, dashed] (0,-\len/2) -- (0,0);
    \draw[gray, thick, dashed] (0,0) -- (0,\len/2);
    \node[circle, fill=black, draw, inner sep=#1 pt] (left) at (-\len/2,0) {};
    \node[circle, fill=black, draw, inner sep=#1 pt] (right) at (\len/2,0) {};
    \node[circle, fill=white, draw, inner sep=#1 pt] (up) at (0,\len/2) {};
    \node[circle, fill=white, draw, inner sep=#1 pt] (down) at (0,-\len/2) {};
\end{scope}
\end{tikzpicture}
}
\NewDocumentCommand{\aiinew}{O{\size}}{
\begin{tikzpicture}[baseline=\base,scale = #1]
\begin{scope}
    \draw[gray, thick, dashed] (-\len/2,0) -- (0,0);
    \draw[gray, thick, dashed] (0,0) -- (\len/2,0);
    \draw[gray, thick, dashed] (0,-\len/2) -- (0,0);
    \draw[gray, thick, dashed] (0,0) -- (0,\len/2);
    \node[circle, fill=black, draw, inner sep=#1 pt] (left) at (-\len/2,0) {};
    \node[circle, fill=black, draw, inner sep=#1 pt] (right) at (\len/2,0) {};
    \node[circle, fill=white, draw, inner sep=#1 pt] (up) at (0,\len/2) {};
    \node[circle, fill=white, draw, inner sep=#1 pt] (down) at (0,-\len/2) {};

    \draw[thick, darkred] (left) -- (up);
    \draw[thick, darkred] (down) -- (right);
\end{scope}
\end{tikzpicture}
}
\NewDocumentCommand{\binew}{O{\size}}{
\begin{tikzpicture}[baseline=\base,scale = #1]
\begin{scope}
    \draw[gray, thick, dashed] (-\len/2,0) -- (0,0);
    \draw[gray, thick, dashed] (0,0) -- (\len/2,0);
    \draw[gray, thick, dashed] (0,-\len/2) -- (0,0);
    \draw[gray, thick, dashed] (0,0) -- (0,\len/2);
    \node[circle, fill=black, draw, inner sep=#1 pt] (left) at (-\len/2,0) {};
    \node[circle, fill=black, draw, inner sep=#1 pt] (right) at (\len/2,0) {};
    \node[circle, fill=white, draw, inner sep=#1 pt] (up) at (0,\len/2) {};
    \node[circle, fill=white, draw, inner sep=#1 pt] (down) at (0,-\len/2) {};

    \draw[thick, darkred] (down) -- (up);
\end{scope}
\end{tikzpicture}
}
\NewDocumentCommand{\biinew}{O{\size}}{
\begin{tikzpicture}[baseline=\base,scale = #1]
\begin{scope}
    \draw[gray, thick, dashed] (-\len/2,0) -- (0,0);
    \draw[gray, thick, dashed] (0,0) -- (\len/2,0);
    \draw[gray, thick, dashed] (0,-\len/2) -- (0,0);
    \draw[gray, thick, dashed] (0,0) -- (0,\len/2);
    \node[circle, fill=black, draw, inner sep=#1 pt] (left) at (-\len/2,0) {};
    \node[circle, fill=black, draw, inner sep=#1 pt] (right) at (\len/2,0) {};
    \node[circle, fill=white, draw, inner sep=#1 pt] (up) at (0,\len/2) {};
    \node[circle, fill=white, draw, inner sep=#1 pt] (down) at (0,-\len/2) {};

    \draw[thick, darkred] (left) -- (right);
\end{scope}
\end{tikzpicture}
}
\NewDocumentCommand{\cinew}{O{\size}}{
\begin{tikzpicture}[baseline=\base,scale = #1]
\begin{scope}
    \draw[gray, thick, dashed] (-\len/2,0) -- (0,0);
    \draw[gray, thick, dashed] (0,0) -- (\len/2,0);
    \draw[gray, thick, dashed] (0,-\len/2) -- (0,0);
    \draw[gray, thick, dashed] (0,0) -- (0,\len/2);
    \node[circle, fill=black, draw, inner sep=#1 pt] (left) at (-\len/2,0) {};
    \node[circle, fill=black, draw, inner sep=#1 pt] (right) at (\len/2,0) {};
    \node[circle, fill=white, draw, inner sep=#1 pt] (up) at (0,\len/2) {};
    \node[circle, fill=white, draw, inner sep=#1 pt] (down) at (0,-\len/2) {};

    \draw[thick, darkred] (down) -- (right);
\end{scope}
\end{tikzpicture}
}
\NewDocumentCommand{\ciinew}{O{\size}}{
\begin{tikzpicture}[baseline=\base,scale = #1]
\begin{scope}
    \draw[gray, thick, dashed] (-\len/2,0) -- (0,0);
    \draw[gray, thick, dashed] (0,0) -- (\len/2,0);
    \draw[gray, thick, dashed] (0,-\len/2) -- (0,0);
    \draw[gray, thick, dashed] (0,0) -- (0,\len/2);
    \node[circle, fill=black, draw, inner sep=#1 pt] (left) at (-\len/2,0) {};
    \node[circle, fill=black, draw, inner sep=#1 pt] (right) at (\len/2,0) {};
    \node[circle, fill=white, draw, inner sep=#1 pt] (up) at (0,\len/2) {};
    \node[circle, fill=white, draw, inner sep=#1 pt] (down) at (0,-\len/2) {};

    \draw[thick, darkred] (left) -- (up);
\end{scope}
\end{tikzpicture}
}

\begin{figure}[!htb]
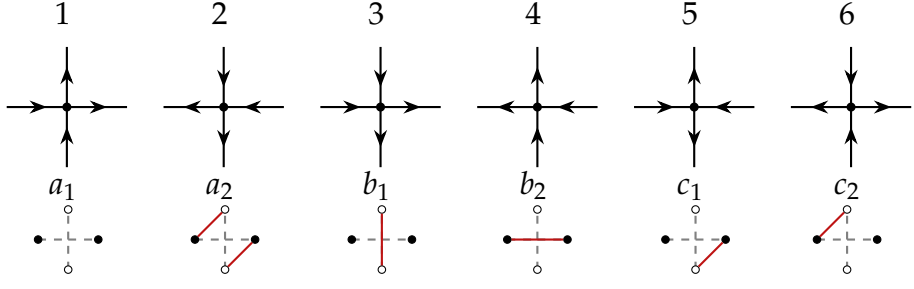

\begin{tabular}{c c c c c c}
1&2&3&4&5&6\\[4pt]
\ai & \aii & \bi & \bii & \ci & \cii \\
$a_1$ & $a_2$ & $b_1$ & $b_2$ & $c_1$ & $c_2$ \\
\hspace{4pt}\ainew[1] & \hspace{4pt}\aiinew[1] & \hspace{4pt}\binew[1] & \biinew[1]& \hspace{4pt}\cinew[1] & \ciinew[1]
\end{tabular}
\caption{All possible arrow patterns.}
\label{fig:iceconfig}
\end{figure}

There is a natural way to associate the orientations of the four edges incident to a vertex $v$ with a collection of zero, one, or two bonds connecting the west/south mid-edges to the east/north mid-edges. Namely, if not every edge is oriented down or left (i.e., we are not in the second case of Figure \ref{fig:iceconfig}), connect the mid-edges that are oriented down or left with a bond. In the case that all four incident edges are oriented left or down (i.e., we are in the second case of Figure \ref{fig:iceconfig}), we pair the west edge with the north edge, and the south edge with the east edge. The six-vertex (ice) rule guarantees that these pairings are always possible.  See Figure~\ref{fig:iceconfig} for all six possible cases.

Given a collection $v^1,\ldots,v^k$ of points in $\mathbb{Z}^2$, and a collection of $t_1,\ldots,t_k$ of types in $\{1,\ldots,6\}$, we associate a collection of $4k$ mid-edges,
\begin{align*}
x^1,\ldots,x^{2k},y^1,\ldots,y^{2k}
\end{align*}
defined as follows. For each $1 \leq i \leq k$, $x^{2i-1},x^{2i},y^{2i-1},y^{2i}$ will be mid-edges incident to vertex $v^i$. Regardless of $t_i$, we will take 
\begin{align}\label{eq:biglist0}
x^{2i-1} = W_{v^i} \quad \text{and} \quad x^{2i} = S_{v^i}.
\end{align} 
But the choice of $y^{2i-1}$ and $y^{2i}$ will depend on both $v^i$ and $t_i$. The rule of thumb is that for $j = 2i-1$ or $j = 2i$, 
\begin{align*}
y^j := \text{point connected to $x^j$ by a bond as in Figure \ref{fig:iceconfig}},
\end{align*}
where if there is no such bond, we simply set $y^j = x^j$. More specifically, for each $1 \leq i \leq k$ we have 
\begin{equation}\label{eq:biglist}
\begin{aligned}
t_i = 1 &\implies y^{2i-1} = W_{v^i} \quad \text{and} \quad y^{2i} = S_{v^i},\\
t_i = 2 &\implies y^{2i-1} = N_{v^i} \quad \text{and} \quad y^{2i} = E_{v^i},\\
t_i = 3 &\implies y^{2i-1} = W_{v^i} \quad \text{and} \quad y^{2i} = N_{v^i},\\
t_i = 4 &\implies y^{2i-1} = E_{v^i} \quad \text{and} \quad y^{2i} = S_{v^i},\\
t_i = 5 &\implies y^{2i-1} = W_{v^i} \quad \text{and} \quad y^{2i} = E_{v^i},\\
t_i = 6 &\implies y^{2i-1} = N_{v^i} \quad \text{and} \quad y^{2i} = S_{v^i},
\end{aligned}
\end{equation}
where again, $W_v, S_v, E_v$ and $N_v$ are defined as in \eqref{eq:cardinal}.

We now define a correlation kernel $L:\midedge \times \midedge \to \mathbb{C}$ on the set of mid-edges as follows. We refer to mid-edges associated to horizontal edges as black edges, and mid-edges associated to vertical edges as white edges, and given $x \in \midedge$ write $c(x) = B$ or $c(x) = W$ for the colour of this edge. Note if $v \in \mathbb{Z}^2$ is a vertex, we have $c(W_v) = c(E_v) = B$ and $c(S_v) = c(N_v) = W$. Define the polynomial
\begin{align*}
\Delta(w_1,w_2)
= (1 + \alpha w_1)(1 + \beta w_2) - \gamma^2 w_1 w_2.
\end{align*}
For $c_1,c_2 \in \{B,W\}$ we now define the function $g(w_1,w_2,c_1,c_2)$ by the relations
\begin{align}
g(w_1,w_2,B,B) &= 1+ \beta w_2,\label{eq:gdef1}\\
g(w_1,w_2,W,W) &= 1 + \alpha w_1,\\
g(w_1,w_2,B,W) &= g(w_1,w_2,W,B) = - \gamma w_1^{1/2}w_2^{1/2},\label{eq:gdef3}
\end{align}
where we make precise the complex square root in \eqref{eq:gdef3} in a moment. 
Now define
\begin{equation} \label{eq:Ldef0} 
        L(x,y) = \oint\!\oint_{|w_1|=|w_2|=1} \frac{dw_1}{2\pi i w_1}\,\frac{dw_2}{2\pi i w_2}\, \frac{w_1^{y_1 -x_1 }w_2^{y_2 - x_2}}{\Delta(w_1,w_2)} g(w_1,w_2,c(x),c(y))
    \end{equation}

Note that while both $w_1^{y_1 -x_1 }w_2^{y_2 - x_2}$ and $g(w_1,w_2,c(x),c(y))$ may have half-powers of $w_1$ and $w_2$, for all $x,y \in \mathbb{M}$ the quantity
\begin{align*}
w_1^{y_1 -x_1 }w_2^{y_2 - x_2}g(w_1,w_2,c(x),c(y))
\end{align*}
contains only integer powers of $w_1$ and $w_2$.

Note that $L$ depends on the parameters $\alpha,\beta,\gamma$ through $\Delta$ and $g$, both of which themselves depend on these parameters. It is also possible to show that the half-powers of $w_1,w_2$ cancel for every combination of $c(x),c(y)$, so that the integrand in \eqref{eq:Ldef0} is a rational function. 

We now state our main result, which describes the correlations of the six-vertex model in the free fermion regime:

\begin{thm} \label{thm:main}
Let $\mathbb{P}$ be the probability measure governing the six-vertex model in the free fermion regime with parameters $a_1,a_2,b_1,b_2,c_1,c_2$ indexed by $\alpha,\beta,\gamma$ as in \eqref{eq:reg1}, \eqref{eq:reg2}, \eqref{eq:reg3}. Let $v^1,\ldots,v^k$ be distinct points in $\mathbb{Z}^2$ and let $t_1,\ldots,t_k$ be labels in $\{1,\ldots,6\}$, as in Figure \ref{fig:iceconfig}. Then 
\begin{align*}
\mathbb{P}\left( \bigcap_{i=1}^k \{\text{Vertex $v^i$ has type $t_i$}\}\right)
= \prod_{i=1}^k a_{t_i} \det_{i,j=1}^{2k} L( x^i,y^j),
\end{align*}
where $L \colon \midedge \times \midedge \to \mathbb{C}$ is the correlation kernel on mid-edges $\midedge$ given in \eqref{eq:Ldef0}, and $x^1,\ldots,x^{2k}$, $y^1,\ldots,y^{2k}$ are mid-edges adjacent to the $v^i$ and defined in \eqref{eq:biglist0} and \eqref{eq:biglist}.
\end{thm}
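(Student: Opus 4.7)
Following the approach announced in the abstract, I would realise $\mathbb{P}$ as the pushforward, under the bond pairing of Figure~\ref{fig:iceconfig}, of an underlying determinantal (dimer-like) process on the mid-edges $\midedge$. Each vertex type $t$ at $v$ corresponds to a specific perfect matching of the four mid-edges incident to $v$ (zero bonds in type $1$, two bonds in type $2$, one bond in types $3$--$6$), so that gluing the local matchings globally converts a six-vertex configuration into a dimer-like configuration on an auxiliary graph built from $\mathbb{Z}^2$. Under this map, the event $\bigcap_{i=1}^k \{\Conf_{v^i}(\sigma) = t_i\}$ becomes the event that a specific set of $2k$ bonds --- precisely the pairs $(x^j,y^j)$ from \eqref{eq:biglist0}--\eqref{eq:biglist} --- is contained in the random matching.

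\textbf{Building the kernel.} I would work first on $\tor_n$. Exploiting the free-fermion identity $a_1 a_2 + b_1 b_2 - c_1 c_2 = 0$ in the parameterisation \eqref{eq:reg1}--\eqref{eq:reg3}, one writes $Z_n = \det K_n$ for a translation-invariant Kasteleyn-type matrix $K_n$ indexed by mid-edges. Standard dimer theory then says that the probability that a prescribed set of $m$ bonds $\{(u_i,v_i)\}_{i=1}^m$ lies in the random matching equals the product of the bond weights times $\det[K_n^{-1}(u_i,v_j)]_{i,j=1}^m$. Diagonalising $K_n$ via Fourier transform on the dual torus produces an explicit formula for $K_n^{-1}$ whose $n \to \infty$ limit is the contour integral in \eqref{eq:Ldef0}. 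A direct symbol calculation then identifies the denominator as $\Delta(w_1,w_2)$ and the numerator as $g(w_1,w_2,c(x),c(y))$, the three cases \eqref{eq:gdef1}--\eqref{eq:gdef3} corresponding respectively to black-black, white-white and mixed entries of the inverse matrix. The mixed case carries the $-\gamma w_1^{1/2} w_2^{1/2}$ Kasteleyn phase, whose half-integer exponents reflect the geometric offset between horizontal and vertical mid-edges.

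\textbf{Assembly and main obstacle.} With the kernel identified, Theorem~\ref{thm:main} follows by combining the bond correspondence, the Kasteleyn determinantal formula, and the weak convergence $\mathbb{P}_n \to \mathbb{P}$, the latter reducing to entrywise convergence of $K_n^{-1}$ by the standard theory of translation-invariant determinantal processes. The prefactor $\prod_{i=1}^k a_{t_i}$ is the product of local bond weights contributed at each $v^i$, which multiplies the Kasteleyn minor and absorbs the local normalisation. The hard part will be the construction in the second step: producing an explicit $K_n$ that simultaneously (i) reproduces $Z_n$, (ii) is translation-invariant so that Fourier diagonalisation applies, and (iii) whose inverted Fourier symbol matches $g/\Delta$ in all three colour combinations. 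The bicolor entries, with their $w_1^{1/2} w_2^{1/2}$ half-powers and associated Kasteleyn signs, are the most delicate point; and the $t_i = 2$ case (where the vertex supports two bonds, W-N and S-E, rather than the usual one) requires a careful sign and weight convention so that the determinant and the prefactor $\prod_{i=1}^k a_{t_i}$ combine consistently across all six vertex types.
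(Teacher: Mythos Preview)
Your overall plan matches the paper's architecture (determinantal process on mid-edges, Kasteleyn--Fourier inversion on $\tor_n$, then $n\to\infty$), but two points are not right as stated.

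\textbf{Four determinants, not one.} On the torus you cannot write $Z_n=\det K_n$ for a single translation-invariant $K_n$: the paper obtains $Z_n=\sum_{\theta\in\{0,1\}^2} C_\theta\det K_\theta$ with four spin structures (Theorem~\ref{thm:partfn}), the signs $C_\theta$ coming from a Kasteleyn-type lemma about winding of snake cycles. Correspondingly the finite-$n$ correlation formula is a weighted sum of four minors (Proposition~\ref{prop:corr}), and only in the $n\to\infty$ limit does this collapse to a single determinant with kernel $L$, using $\sum_\theta C_\theta\det K_\theta/Z_n=1$ and the convergence of each $K_\theta^{-1}$ to the same Riemann integral. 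Your step ``$Z_n=\det K_n$'' skips this.

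\textbf{The underlying process is signed, and this is the mechanism for $t=2$.} The paper does \emph{not} use a genuine dimer model. It introduces \emph{generalised snake configurations}: permutations $\orho$ on $\midedge_n$ that may have crossings, weighted by $(-1)^{\#\text{crossings}}\alpha^A\beta^B\gamma^C$. This defines a \emph{signed} Boltzmann measure $P_n$ whose pushforward under the shape map (which uncrosses crossings) is exactly $\mathbb{P}_n$ (Theorem~\ref{thm:pushforward}). The determinantal correlations come from Jacobi's identity applied to $K_\theta$, not from standard dimer theory. The $t=2$ case then resolves cleanly: the preimage of a type-$2$ vertex contains two local configurations, crossing and non-crossing, with signed weights $-\alpha\beta$ and $\gamma^2$. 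On the determinant side, passing from one to the other swaps the columns $y^{2i-1}\leftrightarrow y^{2i}$ (a sign change) and multiplies the weight prefactor by $\alpha\beta/\gamma^2$; summing over all choices $\phi:T\to\{0,1\}$ therefore contributes $(1-\alpha\beta/\gamma^2)^{|T|}\gamma^{2|T|}=(\gamma^2-\alpha\beta)^{|T|}=a_2^{|T|}$, which is exactly the missing factor in $\prod_i a_{t_i}$. You flagged $t=2$ as delicate but did not supply this cancellation, and framing the intermediary as a probability-based ``standard dimer model'' hides the signed structure that makes it work.
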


We emphasise that the six-vertex model in the free-fermion regime can be realised bijectively from several integrable models, notably domino tilings. It is therefore natural to expect that one could in principle recover our main result, Theorem~\ref{thm:main}, by transferring to the domino-tiling viewpoint and applying existing methods for producing correlation formulas (for instance, the \(k=1\) frequencies in Proposition~8.2 of \cite{CKP}, or by taking a suitable limit from the general \(k \ge 1\) formula of Theorem~4.2 in \cite{KOS}). However, this route involves several non-trivial intermediate steps and technical justifications. By contrast, our approach establishes the result directly within the six-vertex framework and remains fully self-contained.

\subsection{Vertex type frequencies}

Our main result gives reasonably straightforward formulas for the frequency of each vertex type in the setting of Theorem \ref{thm:main}. 

Namely, for $\mathbf{w} = (w_1,w_2,\tilde{w}_1,\tilde{w}_2)$, define the vertex polynomials
\begin{align}
f_1(\mathbf w) \;=&\; (1+\beta w_2)(1+\alpha \tilde w_1) - \gamma^2 w_1 \tilde w_2, \label{eq:vertpoly1}\\[6pt]
f_2(\mathbf w) \;=&\; (\gamma^2 - \alpha\beta)\!\left(\gamma^2 \tilde w_1 w_2 - (1+\alpha \tilde w_1)(1+\beta w_2)\right) w_1 \tilde w_2,\\[6pt]
f_3(\mathbf w) \;=&\; \beta \tilde w_2\!\left((1+\beta w_2)(1+\alpha \tilde w_1) - \gamma^2 w_1 w_2\right),\\[6pt]
f_4(\mathbf w) \;=&\; \alpha w_1\!\left((1+\beta w_2)(1+\alpha \tilde w_1) - \gamma^2 \tilde w_1 \tilde w_2\right),\\[6pt]
f_5(\mathbf w) \;=&\; \gamma^2 \tilde w_2(1+\beta w_2)(w_1 - \tilde w_1),\\[6pt]
f_6(\mathbf w) \;=&\; \gamma^2 w_1(1+\alpha \tilde w_1)(\tilde w_2 - w_2). \label{eq:vertpoly6}
\end{align}

It is a simple but tedious calculation to verify that 
\begin{equation}\label{eq:summy}
\sum_{t=1}^6 f_t(\mathbf{w}) = \Delta(w_1,w_2)  \Delta(\tilde{w}_1,\tilde{w}_2). 
\end{equation}

We prove the following result as a corollary of Theorem \ref{thm:main}.

\begin{thm} \label{thm:frequency}
In the setting of Theorem \ref{thm:main}, for any $v \in \mathbb{Z}^2$ and $t \in \{1,\ldots,6\}$ we have
\begin{align*}
\mathbb{P}( \text{Vertex $v$ has type $t$} )
= \int_{(S^1)^4} 
\frac{ \mathrm{d}w_1}{2 \pi \iota w_1} \frac{ \mathrm{d}w_2}{2 \pi \iota w_2} \frac{ \mathrm{d}\tilde{w}_1}{2 \pi \iota \tilde{w}_1} \frac{ \mathrm{d}\tilde{w}_2}{2 \pi \iota \tilde{w}_2} \frac{ f_t(\mathbf{w})}{ \Delta(w_1,w_2) \Delta(\tilde{w}_1,\tilde{w}_2) }.
\end{align*}

\end{thm}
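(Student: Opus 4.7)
The plan is to deduce Theorem \ref{thm:frequency} from Theorem \ref{thm:main} specialised to $k = 1$, followed by a direct case-by-case identification of the resulting $2 \times 2$ determinant with the integrand on the right-hand side. Specialising Theorem \ref{thm:main} to a single vertex $v$ and a single type $t$ gives
\begin{align*}
\mathbb{P}(\text{Vertex } v \text{ has type } t) = a_t \bigl[ L(x^1,y^1) L(x^2,y^2) - L(x^1,y^2) L(x^2,y^1) \bigr],
\end{align*}
where $x^1 = W_v$, $x^2 = S_v$, and $y^1, y^2$ are read off from \eqref{eq:biglist} according to $t$. Each factor $L(x^i,y^j)$ is a double contour integral in its own pair of variables, so the product of the two $L$-factors in each term becomes a quadruple contour integral. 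Using distinct integration variables $(w_1,w_2)$ for the first factor of each product and $(\tilde w_1, \tilde w_2)$ for the second, we obtain the single integrand
\begin{align*}
\frac{N_t(\mathbf{w})}{\Delta(w_1,w_2)\, \Delta(\tilde w_1, \tilde w_2)},
\end{align*}
where $N_t(\mathbf w)$ is the numerator built from the two products, with the minus sign from the determinant.

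The key step is to verify $a_t N_t(\mathbf w) = f_t(\mathbf w)$ for each $t \in \{1,\ldots,6\}$. For every type $t$, I would use \eqref{eq:biglist0}--\eqref{eq:biglist} to read off the four displacements $y^j - x^i \in (\tfrac{1}{2}\mathbb{Z})^2$ and the four colour pairs $(c(x^i),c(y^j)) \in \{B,W\}^2$. Plugging these into the definitions \eqref{eq:Ldef0} and \eqref{eq:gdef1}--\eqref{eq:gdef3}, the half-integer powers arising from $w_1^{y_1 - x_1} w_2^{y_2 - x_2}$ cancel exactly against the $w_1^{1/2} w_2^{1/2}$ inside the $g$-factors whenever a black/white mixed pair appears, producing an honest rational function. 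For $t = 1$, for instance, the computation gives numerator
$(1+\beta w_2)(1+\alpha \tilde w_1) - \gamma^2 w_1 \tilde w_2$, matching $f_1(\mathbf w)$ in \eqref{eq:vertpoly1}; and $a_1 = 1$. The remaining five types are handled identically, using $a_2 = \gamma^2 - \alpha\beta$, $a_3 = b_1 = \beta$, $a_4 = b_2 = \alpha$, $a_5 = a_6 = \gamma$ from the parametrisation \eqref{eq:reg1}--\eqref{eq:reg3}; in each case the two products of $g$-factors produce precisely the two terms of $f_t(\mathbf w)$ after multiplication by $a_t$.

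The main obstacle is really only bookkeeping: correctly tracking the six distinct assignments of $(y^1,y^2)$, the resulting colour labels, and the signs and half-powers from $g$. A useful consistency check throughout is the identity \eqref{eq:summy}: summing the six verified expressions for $f_t(\mathbf w)$ collapses the numerator to $\Delta(w_1,w_2)\Delta(\tilde w_1,\tilde w_2)$, so the total probability integrates to $\oint\oint\oint\oint \frac{dw_1 dw_2 d\tilde w_1 d\tilde w_2}{(2\pi i)^4 w_1 w_2 \tilde w_1 \tilde w_2} = 1$, confirming that no type has been miscounted.
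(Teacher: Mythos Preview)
Your proposal is correct and follows essentially the same route as the paper's own proof: specialise Theorem~\ref{thm:main} to $k=1$, assign the integration variables $(w_1,w_2)$ to the row $x^1=W_v$ and $(\tilde w_1,\tilde w_2)$ to the row $x^2=S_v$, and then identify the resulting $2\times 2$ determinant with $f_t(\mathbf w)/\Delta\tilde\Delta$ case by case (the paper packages this as $f_t(\mathbf w)=a_t\det R_t(\mathbf w)$ and tabulates the six matrices $R_t$). Your worked $t=1$ example and the consistency check via \eqref{eq:summy} match the paper exactly.
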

The reader will note that the fact that the polynomials satisfy \eqref{eq:summy} implies the (reassuring) fact that the probabilities $\mathbb{P}( \text{Vertex $v$ has type $t$} )$ sum over $1 \leq t \leq 6$ to unity.

Of course, the frequency of vertex types can be recovered from the partition function (or more specifically, the free energy). Indeed, to emphasise parameter dependence, write $Z_n(a_1,\ldots,a_6)$ for the partition function of the six-vertex model on the $n$-by-$n$ torus with parameters $a_1,\ldots,a_6$. If $\mathbb{P}_n$ is the associated probability measure, then for $1 \leq t \leq 6$ we have
\begin{align*}
\mathbb{E}_n[ \# \{ \text{Number of vertices of type $t$}\} ] = a_t\frac{\mathrm{d}}{\mathrm{d}a_t} \log Z_n(a_1,\ldots,a_6).
\end{align*}
Accordingly, since the torus contains $n^2$ vertices, by translation invariance, we have
\begin{align*}
\mathbb{P}_n( \text{Vertex $v$ has type $t$} ) = a_t\frac{\mathrm{d}}{\mathrm{d}a_t} \frac{1}{n^2} \log Z_n(a_1,\ldots,a_6).
\end{align*}
If we define the free energy by 
\begin{equation*}
E(a_1,\ldots,a_6) := \lim_{n \to \infty} \frac{1}{n^2} \log Z_n(a_1,\ldots,a_6),
\end{equation*} 
then, assuming we may interchange the limit and differentiation in $a_t$, we have 
\begin{equation*}
    \mathbb{P}( \text{Vertex $v$ has type $t$}) = a_t\frac{\mathrm{d}}{\mathrm{d}a_t} E(a_1,\ldots,a_6).
\end{equation*}

The free energy $E(a_1,\ldots,a_6)$ has been computed explicitly in various places \cite{Lieb1967a,Lieb1967b, Sutherland1967, DC}, but it is unclear to the authors how differentiating through this quantity relates to the frequency probabilities we obtain in Theorem \ref{thm:frequency}.

\subsection{Proof approach and article structure}
We conclude by briefly outlining our approach to proving Theorem \ref{thm:main}, which borrows a fundamental idea from a recent article of the authors \cite{JS}. This idea is that certain models in statistical mechanics can be realised as a pushforward of another model with a determinantal structure. As is noted in that paper, the measure being pushed forward is \emph{not} a conventional probability measure, but rather a signed (Radon) measure on some ensemble. Nonetheless, one can make greater sense of these signed measures, due to them having determinantal correlations. Better still, the pushforwards of these determinantal correlations give rise to correlations of our original model, defining a genuine probability measure.

In Section \ref{sec:bij} we show how the six-vertex model on the torus can be associated bijectively with certain configurations of paths travelling in the torus. In Section \ref{sec:kast} we develop a Kasteleyn theory for the six-vertex model, showing that the partition function for this model can be written as a sum of determinants of certain operators on the mid-edges. We go on to show that the correlations of this model can be described in terms of the inverses of these operators. In Section \ref{sec:scalinglim} we subsequently unravel this bijection, to establish explicit correlation formulas for the six-vertex model on the torus. We then send the size of the torus to infinity, to obtain a proof of our main result, Theorem \ref{thm:main}.

\section{Correspondence between six-vertex model and snake configurations} \label{sec:bij}

The six-vertex model on the infinite lattice or on the discrete two-dimensional torus can be associated bijectively with certain collections of non-intersecting paths on the same space, which we call \textit{snake configurations}. We will see in particular that this bijection interacts particularly harmoniously with the parameter regime in the free-fermion case $\triangle = 0$. Indeed, in this case, the bijection sends Boltzmann measures to Boltzmann measures.

\subsection{The bijection}

In this section, we establish a bijection between six-vertex configurations on the discrete two-dimensional torus graph $\mathbb{T}_n = \mathbb{Z}_n^2$ and certain paths connecting the \emph{mid-edges} of points of $\mathbb{T}_n$. Namely, let us define the set of mid-edges as $\midedge_n := \midedge_n^B \cup \midedge_n^W$ where 
\begin{align*}
\midedge_n^B &:= \{ x + \tfrac{1}{2}e^1 : x \in \mathbb{T}_n \},\\
\midedge_n^B &:= \{ x + \tfrac{1}{2}e^2 : x \in \mathbb{T}_n \},
\end{align*}
with $e^1 = (1,0)$, $e^2 = (0,1)$, so that $\midedge_n^B$ are the midpoints of horizontal edges (which we call the black midpoints) and $\midedge_n^W$ are the midpoints of vertical edges (which we call the white midpoints).

Now we can specify the space of paths with which we work.

\begin{df} \label{df:snake}
A \emph{generalised snake configuration on $\midedge_n$} is a permutation $\orho:\midedge_n \to \midedge_n$ with the additional property that, for all $x \in \midedge_n$,
\begin{equation*}
    \orho(x) \in \begin{cases}
        \{x,x+ \tfrac{1}{2}(e^1 + e^2), x + e^1\} &\text{ if } x \text{ black,}\\
        \{x,x+ \tfrac{1}{2}(e^1 + e^2), x + e^2\} &\text{ if } x \text{ white.}
    \end{cases}
\end{equation*}
Let $\gen_n$ be the set of generalised snake configurations on $\midedge_n$.

A \emph{crossing} in a generalised snake configuration $\orho$ is a vertex $v \in \mathbb{T}_n$ for which
\begin{align*}
\orho(v - \tfrac{1}{2}e^1) = v+\tfrac{1}{2}e^1 \quad \text{and} \quad \orho(v - \tfrac{1}{2}e^2) = v+\tfrac{1}{2}e^2.
\end{align*}
A \emph{pure snake configuration} (which we will often denote by $\rho$) is a generalised snake configuration with no crossings. Denote $\pure_n$ to be the set of pure snake configurations on $\midedge_n$.
\end{df}

There is a bijective correspondence between six-vertex configurations on $\mathbb{T}_n$ and pure snake configurations on $\midedge_n$. This bijection is well-known as discussed at length (albeit in varying coordinate systems and perspectives) in, for instance, \cite{GN} and \cite{ZinnJustin2009}. 

Since the bijection is well-trodden, we will be content to give a somewhat informal statement and proof.
\begin{lemma} \label{lem:bij}
There is a bijection 
\begin{align*}
\Phi \colon \sixv_n \longleftrightarrow \pure_n.
\end{align*}
between the set $\sixv_n$ of six-vertex configurations on $\mathbb{T}_n$ and the set $\pure_n$ of pure snake configurations on $\mathbb{T}_n$.
This bijection is constructed by looking at the local vertex type at $v$, and deciding the values of $\rho(v-\tfrac{1}{2}e^1)$ and $\rho(v-\tfrac{1}{2}e^2)$ according to the rule in Figure \ref{fig:arrowtosnake}:
\begin{itemize}
\item a red line from $v-\tfrac{1}{2}e^i$ to $v+\tfrac{1}{2}e^j$ corresponds to setting $\rho(v-\tfrac{1}{2}e^i) = v+\tfrac{1}{2}e^j$; and
\item no red line emerging from $v -\tfrac{1}{2}e^i$ corresponds to a fixed-point mid-edge, i.e., $\rho(v-\tfrac{1}{2}e^i) = v - \tfrac{1}{2}e^i$.
\end{itemize}
\end{lemma}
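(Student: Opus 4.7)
The plan is to verify the bijection directly by local analysis at each vertex, constructing $\Phi$ and $\Phi^{-1}$ explicitly and checking they are well-defined.

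For the forward direction, given $\sigma \in \sixv_n$, I would define $\rho = \Phi(\sigma)$ by specifying $(\rho(W_v), \rho(S_v))$ at each $v \in \tor_n$ according to the type $\Conf_v(\sigma)$ via Figure \ref{fig:iceconfig}. Since every mid-edge in $\midedge_n$ is uniquely of the form $W_v$ (if horizontal) or $S_v$ (if vertical), this determines $\rho$ on all of $\midedge_n$. A six-case inspection of Figure \ref{fig:iceconfig} confirms the images lie in the allowed sets of Definition \ref{df:snake}, and that no type produces both $\rho(W_v)=E_v$ and $\rho(S_v)=N_v$ — these would correspond to the incompatible types $4$ and $3$ respectively — so $\rho$ has no crossings.

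The main content is bijectivity of $\rho$. Fix a horizontal mid-edge $m = E_v = W_{v+e^1}$; the mid-edges whose allowed image set contains $m$ are precisely $E_v$, $W_v$, and $S_v$. Extracting from Figure \ref{fig:iceconfig}, one reads off that $\rho(W_v) = E_v$ iff $\Conf_v(\sigma) = 4$; that $\rho(S_v) = E_v$ iff $\Conf_v(\sigma) \in \{2,5\}$; and that $\rho(E_v) = E_v$ iff $\Conf_{v+e^1}(\sigma) \in \{1,3,5\}$. Meanwhile, by inspecting Figure \ref{fig:arrowpatterns}, the arrow on the shared edge between $v$ and $v+e^1$ points east as read from $v$ iff $\Conf_v(\sigma) \in \{1,3,6\}$, and points east as read from $v+e^1$ iff $\Conf_{v+e^1}(\sigma) \in \{1,3,5\}$. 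Consistency of this single arrow — the content of the ice rule — therefore forces
\begin{equation*}
\Conf_v(\sigma) \in \{1,3,6\} \iff \Conf_{v+e^1}(\sigma) \in \{1,3,5\},
\end{equation*}
which makes the three preimage conditions above mutually exclusive and exhaustive. A symmetric analysis for $m = N_v = S_{v+e^2}$ completes the check that $\rho$ is a bijection, hence an element of $\pure_n$.

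For the inverse, given $\rho \in \pure_n$, the pair $(\rho(W_v), \rho(S_v))$ at each $v$ lies in $\{W_v,N_v,E_v\} \times \{S_v,E_v,N_v\}$. Bijectivity of $\rho$ rules out $(N_v,N_v)$ and $(E_v,E_v)$, and the no-crossing condition rules out $(E_v,N_v)$; the remaining six pairs match bijectively to the six arrow types via Figure \ref{fig:iceconfig}. Consistency of arrow orientations on each shared edge then follows from the same three-way classification above, read in reverse. The only real obstacle here is bookkeeping: one must simultaneously track arrow patterns, local snake values, and allowed preimages across each pair of adjacent vertices, and verify that the ice rule partitions the six types exactly so that these three classifications align. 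Since the full dictionary is already visible from Figure \ref{fig:iceconfig}, the informal proof announced by the authors likely amounts to little more than this observation.
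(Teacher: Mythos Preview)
Your proposal is correct and follows exactly the approach the paper intends: verify that $\Phi(\sigma)$ is a pure snake configuration and that $\Phi$ is invertible. The paper's own proof is essentially a placeholder (``both details are not hard to check'') together with a reference to Gorin--Nicoletti, whereas you actually carry out the local case analysis in full; your preimage-counting argument for bijectivity of $\rho$ is clean and correct. One small terminological quibble: the consistency condition $\Conf_v(\sigma)\in\{1,3,6\}\iff\Conf_{v+e^1}(\sigma)\in\{1,3,5\}$ is not really ``the content of the ice rule'' but rather the well-definedness of the edge orientation --- the ice rule is the two-in/two-out constraint already encoded in the six local types.
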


\begin{figure}[!htb]
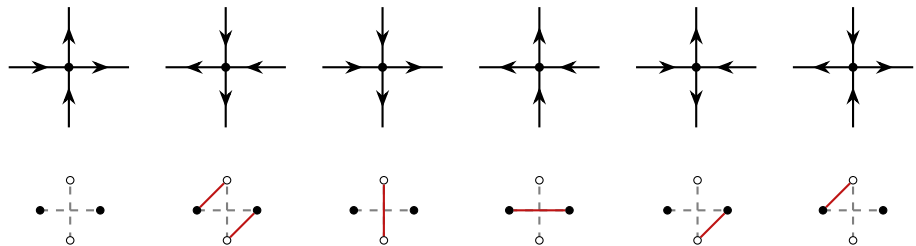

\begin{tabular}{c c c c c c}\label{tab:arrowtosnake}
\ai & \aii & \bi & \bii & \ci & \cii \\
\vspace{4pt}\\
\hspace{4pt}\ainew[1] & \hspace{4pt}\aiinew[1] & \hspace{4pt}\binew[1] & \biinew[1]& \hspace{4pt}\cinew[1] & \ciinew[1]\\
\end{tabular}
\caption{Arrow patterns and their corresponding path representations.}
\label{fig:arrowtosnake}
\end{figure}

\begin{proof}

It suffices to check that, for all $\sigma \in \sixv_n$, $\Phi(\sigma)$ is a genuine pure snake configuration, and that the map $\Phi$ is invertible. Both details are not hard to check. For further details, see Gorin and Nicoletti \cite[Section 2]{GN}.

\end{proof}

See Figure \ref{fig:validconfigtosnakes} for an example of this bijection in action.
 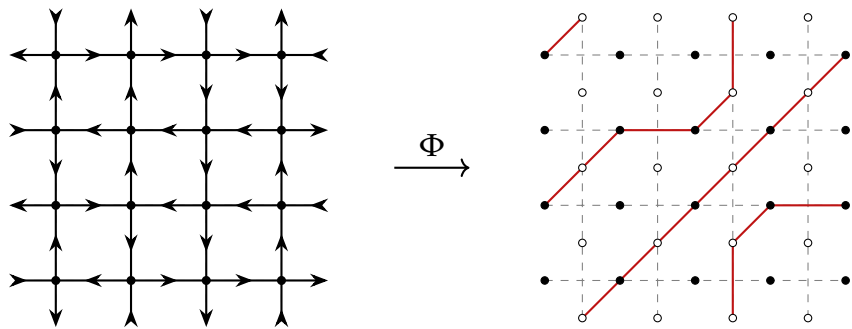
\begin{figure}[!htb]
    \centering
        \begin{tikzpicture}[scale=1]

        \def\n{4}
        \pgfmathtruncatemacro{\Nminusone}{\n-1}
        
        \foreach \i in {0,...,\Nminusone}{
            \foreach \j in {0,...,\Nminusone}{
                \fill (\i,\j) circle (0.06cm);
            }
        }

        \begin{scope}[on background layer]
            \draw[gray, dashed] (-0.5,-0.5) grid (\n - 0.5,\n - 0.5);
        \end{scope}

        \def\uparrows{+,-,-,+,-,+,-,+,+,+,-,-,-,+,-,+}
        
        \foreach [count=\i from 0] \a in \uparrows {
            \pgfmathtruncatemacro{\x}{mod(\i,\n)}
            \pgfmathtruncatemacro{\y}{int(\i/\n)}

            \ifthenelse{\equal{\y}{\Nminusone}}{
            \ifthenelse{\equal{\a}{+}}{
                \draw[endarrow] (\x,\n-1) --+ (0,0.62);
                \draw[startarrow] (\x,-0.62) --+ (0,0.62);
                }{
                \draw[startarrow] (\x,\n-0.38) --+ (0,-0.62);
                \draw[endarrow] (\x,0) --+ (0,-0.62);
                }
            }{
            \ifthenelse{\equal{\a}{+}}{
                \draw[midarrow2] (\x,\y) --+ (0,1);
                }{
                \draw[midarrow2] (\x,\y+1) --+ (0,-1);
                }
            }
            }

        \def\hrzarrows{-,+,+,+,+,-,-,-,-,-,-,+,+,+,+,-}
        
        \foreach [count=\i from 0] \a in \hrzarrows {
            \pgfmathtruncatemacro{\x}{mod(\i,\n)}
            \pgfmathtruncatemacro{\y}{int(\i/\n)}

            \ifthenelse{\equal{\x}{\Nminusone}}{
            \ifthenelse{\equal{\a}{+}}{
                \draw[endarrow] (\n-1,\y) --+ (0.62,0);
                \draw[startarrow] (-0.62,\y) --+ (0.62,0);
                }{
                \draw[startarrow] (\n-0.38,\y) --+ (-0.62,0);
                \draw[endarrow] (0,\y) --+ (-0.62,0);
                }
            }{
            \ifthenelse{\equal{\a}{+}}{
                \draw[midarrow2] (\x,\y) --+ (1,0);
                }{
                \draw[midarrow2] (\x+1,\y) --+ (-1,0);
                }
            }
            }

        \draw[->, thick] (4.5,1.5) --+ (1,0) node[midway, above] {$\Phi$};
        
        \begin{scope}[xshift = 7cm]

        \draw[gray, dashed] (-0.5,-0.5) grid (\n - 0.5,\n - 0.5);

        \begin{pgfonlayer}{foreground}
        \foreach \x in {0,1,...,\Nminusone}{
            \foreach \y in {0,1,...,\Nminusone}{
                \node[circle, fill=black, draw, inner sep=1pt] at (\x+0.5,\y) {};
                \node[circle, fill=white, draw, inner sep=1pt] at (\x,\y + 0.5) {};
            }
        }
        \foreach \z in {0,1,...,\Nminusone}{
            \node[circle, fill=black, draw, inner sep=1pt] at (-0.5,\z) {};
            \node[circle, fill=white, draw, inner sep=1pt] at (\z,-0.5) {};
        }
        \end{pgfonlayer}
        
        \draw[thick, darkred] (0,-0.5) -- ++ (0.5,0.5) -- ++ (0.5,0.5)-- ++ (0.5,0.5)-- ++ (0.5,0.5)-- ++ (0.5,0.5)-- ++ (0.5,0.5)-- ++ (0.5,0.5);
        \draw[thick, darkred] (-0.5,3) --+ (0.5,0.5);
        \draw[thick, darkred] (-0.5,1) -- ++ (0.5,0.5) -- ++ (0.5,0.5) -- ++ (1,0) -- ++ (0.5,0.5) --++ (0,1);
        \draw[thick, darkred] (2,-0.5) -- ++ (0,1) -- ++ (0.5,0.5) -- ++ (1,0);
        
        \end{scope}
        
        \end{tikzpicture}
    \caption{Applying the transformation to the configuration from Figure~\ref{fig:validconfig}.}
    \label{fig:validconfigtosnakes}
\end{figure}

\subsection{The shape projector}\label{subsec:shape}

Recall the definitions of generalised snake configurations, crossings, and pure snake configurations given in Definition \ref{df:snake}.
There is a natural map 
\begin{align*}
\sh\colon\gen_n \twoheadrightarrow \pure_n
\end{align*}
that is defined by simply "straightening out" all crossings. More explicitly, given a  generalised snake configuration $\orho$, we define a pure snake configuration $\rho := \sh(\orho)$ by letting $\rho(v - \tfrac{1}{2}e^i) = \orho(v - \tfrac{1}{2}e^i)$, unless $v$ is a crossing, in which case we set 
\begin{align*}
\rho( v - \tfrac{1}{2}e^1) = v+\tfrac{1}{2}e^2 \quad \text{and} \quad \rho( v - \tfrac{1}{2}e^2) = v+\tfrac{1}{2}e^1.
\end{align*}
In other words, we `uncross' the crossing at $v$. Note that this map is certainly a projection, as $\pure_n \subset \gen_n$ and for all $\rho \in \pure_n$, we have that $\sh(\rho) = \rho$.
So we can remark the following.
\begin{rem}\label{rem:partGS}
    We may partition the set of generalised snakes as
    \begin{equation*}
        \gen_n = \bigsqcup_{\rho \in \pure_n}\sh^{-1}(\rho).
    \end{equation*}
\end{rem} 

See Figure \ref{fig:shapeoperator} for an example of the shape map.

\begin{figure}[!htb]
    \centering
    \begin{tikzpicture}
    
    \begin{scope}[shift={(0,0)}]
        \def\n{4}
        \pgfmathtruncatemacro{\Nminusone}{\n-1}

        \draw[gray, dashed] (-0.5,-0.5) grid (\n - 0.5,\n - 0.5);

        \begin{pgfonlayer}{foreground}
        \foreach \x in {0,1,...,\Nminusone}{
            \foreach \y in {0,1,...,\Nminusone}{
                \node[circle, fill=black, draw, inner sep=1pt] at (\x+0.5,\y) {};
                \node[circle, fill=white, draw, inner sep=1pt] at (\x,\y + 0.5) {};
            }
        }
        \foreach \z in {0,1,...,\Nminusone}{
            \node[circle, fill=black, draw, inner sep=1pt] at (-0.5,\z) {};
            \node[circle, fill=white, draw, inner sep=1pt] at (\z,-0.5) {};
        }
        \end{pgfonlayer}

        \draw[thick, darkred] (0,-0.5) -- ++ (0.5,0.5) -- ++ (0.5,0.5)-- ++ (0.5,0.5) -- ++ (1,0) -- ++ (1,0);
        \draw[thick, darkred] (-0.5,3) --+ (0.5,0.5);
        \draw[thick, darkred] (-0.5,1) -- ++ (0.5,0.5) -- ++ (0.5,0.5) -- ++ (1,0) -- ++ (1,0) -- ++ (0.5,0.5)-- ++ (0.5,0.5);
        \draw[thick, darkred] (2,-0.5) -- ++ (0,1) -- ++ (0,1)-- ++ (0,1) --++ (0,1);
    \end{scope}
    \draw[->, thick] (4.25,1.5) --+ (1,0) node[midway, above] {$\sh$};
    
    \begin{scope}[shift={(6.5,0)}]
        \def\n{4}
        \pgfmathtruncatemacro{\Nminusone}{\n-1}

        \draw[gray, dashed] (-0.5,-0.5) grid (\n - 0.5,\n - 0.5);

        \begin{pgfonlayer}{foreground}
        \foreach \x in {0,1,...,\Nminusone}{
            \foreach \y in {0,1,...,\Nminusone}{
                \node[circle, fill=black, draw, inner sep=1pt] at (\x+0.5,\y) {};
                \node[circle, fill=white, draw, inner sep=1pt] at (\x,\y + 0.5) {};
            }
        }
        \foreach \z in {0,1,...,\Nminusone}{
            \node[circle, fill=black, draw, inner sep=1pt] at (-0.5,\z) {};
            \node[circle, fill=white, draw, inner sep=1pt] at (\z,-0.5) {};
        }
        \end{pgfonlayer}

        \draw[thick, darkred] (0,-0.5) -- ++ (0.5,0.5) -- ++ (0.5,0.5)-- ++ (0.5,0.5)-- ++ (0.5,0.5)-- ++ (0.5,0.5)-- ++ (0.5,0.5)-- ++ (0.5,0.5);
        \draw[thick, darkred] (-0.5,3) --+ (0.5,0.5);
        \draw[thick, darkred] (-0.5,1) -- ++ (0.5,0.5) -- ++ (0.5,0.5) -- ++ (1,0) -- ++ (0.5,0.5) --++ (0,1);
        \draw[thick, darkred] (2,-0.5) -- ++ (0,1) -- ++ (0.5,0.5) -- ++ (1,0);
    \end{scope}
    \end{tikzpicture}
    \caption{Demonstration of the map, $\sh$, from a generalised snake configuration to its corresponding "uncrossed" pure snake configuration.}
    \label{fig:shapeoperator}
\end{figure}
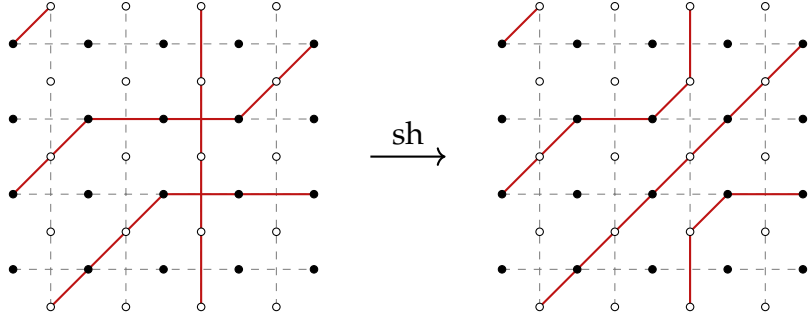

Given a pure snake configuration $\rho$, we would like to describe the collection $\mathrm{sh}^{-1}(\rho)$ of generalised snake configurations $\bar{\rho}$ for which $\mathrm{sh}(\bar{\rho}) = \rho$. In this direction, given a pure snake configuration $\rho$ write 
\begin{equation*}
    \aiinew(\rho) = \{v \in \tor_n: \rho(v - \tfrac{1}{2}e^1) = v + \tfrac{1}{2}e^1 \text{ and } \rho(v - \tfrac{1}{2}e^2) = v + \tfrac{1}{2}e^2\}.
\end{equation*}
Then, it is clear that $\mathrm{sh}^{-1}(\rho)$ is encoded by choosing subsets $S$ of $\aiinew(\rho)$ as locations to replace paths by crossings. 
 \begin{rem}\label{rem:pure<->gen}
    For each $\rho \in \pure_n$, there is a bijection
    \begin{equation*}
    \{S \subset \aiinew(\rho)\}  \longleftrightarrow \mathrm{sh}^{-1}(\rho),
    \end{equation*} 
by associating with a subset $S$ of $\aiinew(\rho)$ the unique generalised snake configuration $\bar{\rho}$ in $\mathrm{sh}^{-1}(\rho)$ whose crossings occur at the vertices $S$. 
\end{rem}

\subsection{Boltzmann distributions and six-vertex bijections}\label{subsec:boltzmanncorr}
In this section, we discuss the parameterisation of the model and investigate the interaction between partition functions of six-vertex configurations and snake configurations through the bijective correspondence. We find that in the free-fermion regime, this map is particularly harmonious: Boltzmann distributions on six-vertex models correspond to (signed) Boltzmann distributions on general snake configurations.

To establish this connection, recall from the introduction that given parameters $a_1$, $a_2$, $b_1$, $b_2$, $c_1$, and $c_2$, we can define a probability measure $\mathbb{P}_n$ on six-vertex configurations on the torus as in Equations~\eqref{eq:weightsixv} and~\eqref{eq:boltzmannsixv}. We note that different parameter regimes may give rise to the same Boltzmann measure for the six-vertex model. For instance, for any $t > 0$, multiplying each of the parameters by $t$ leaves the probability measure invariant. A slightly more subtle observation is encoded in the following proposition. 

\begin{proposition} \label{prop:c1c2}
The Boltzmann probability measure $\mathbb{P}_n$ on six-vertex configurations on $\mathbb{T}_n$ associated with $(a_1,a_2,b_1,b_2,c_1,c_2)$ only depends on the parameters $c_1$ and $c_2$ through the product $c_1c_2$. 
\end{proposition}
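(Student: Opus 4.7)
I would reduce the proposition to the purely combinatorial identity
\begin{equation*}
n_5(\sigma) \;=\; n_6(\sigma) \qquad \text{for every } \sigma \in \sixv_n,
\end{equation*}
where $n_t(\sigma) := \#\{v \in \tor_n : w_v(\sigma) = a_t\}$ counts type-$t$ vertices in the labelling of Figure~\ref{fig:arrowpatterns}. Granting this, the weight factors as
\begin{equation*}
w(\sigma) \;=\; a_1^{n_1(\sigma)} a_2^{n_2(\sigma)} b_1^{n_3(\sigma)} b_2^{n_4(\sigma)} c_1^{n_5(\sigma)} c_2^{n_6(\sigma)} \;=\; a_1^{n_1} a_2^{n_2} b_1^{n_3} b_2^{n_4} (c_1 c_2)^{n_5(\sigma)},
\end{equation*}
so $w(\sigma)$ depends on $(c_1,c_2)$ only through $c_1 c_2$. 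Since $Z_n = \sum_{\sigma} w(\sigma)$ inherits the same dependence, so does $\mathbb{P}_n = w/Z_n$, which is the statement of the proposition.

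\textbf{Main step: proving $n_5 = n_6$.} The argument is a discrete flow conservation. For each column index $j \in \{0,1,\ldots,n-1\}$, define the horizontal rightward flux
\begin{equation*}
F(j) \;:=\; \#\bigl\{\,y \in \{0,\ldots,n-1\} \,:\, \text{the edge between }(j,y)\text{ and }(j+1,y)\text{ is oriented rightward}\,\bigr\}.
\end{equation*}
For a vertex $v = (j+1,y)$, its west edge is counted by $F(j)$ precisely when it points into $v$, and its east edge is counted by $F(j+1)$ precisely when it points out of $v$. A one-line check against the six possibilities in Figure~\ref{fig:arrowpatterns} shows: types $1$ and $3$ (in-from-west, out-to-east) and types $2$ and $4$ (out-to-west, in-from-east) each contribute $0$ to $F(j+1) - F(j)$; type $5$ (both horizontal arrows incoming) contributes $-1$; type $6$ (both horizontal arrows outgoing) contributes $+1$. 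Summing over $y$ yields
\begin{equation*}
F(j+1) - F(j) \;=\; n_6^{(j+1)}(\sigma) - n_5^{(j+1)}(\sigma),
\end{equation*}
where $n_t^{(k)}(\sigma)$ counts the type-$t$ vertices in column $k$. Summing over $j = 0,\ldots,n-1$ and using the periodicity $F(n) = F(0)$ produced by the torus boundary conditions gives $0 = n_6(\sigma) - n_5(\sigma)$, which is the desired identity.

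\textbf{Obstacle.} There is no substantive obstacle: the whole content is the per-vertex case check above, which is immediate from the six pictures. I note that the argument uses only the ice rule and the toroidal topology — not the free-fermion reduction, the parametrisation \eqref{eq:reg1}--\eqref{eq:reg3}, or the snake bijection of Lemma~\ref{lem:bij} — so the proposition in fact holds for the six-vertex model on $\tor_n$ with arbitrary positive weights $(a_1,a_2,b_1,b_2,c_1,c_2)$.
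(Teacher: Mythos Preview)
Your proof is correct and uses essentially the same idea as the paper's: both establish $n_5(\sigma)=n_6(\sigma)$ by conservation of rightward horizontal arrows on the torus. The paper does this in a single global double-count (each rightward horizontal edge is the west edge of some vertex and the east edge of another, giving $A_1+B_1+C_1=A_1+B_1+C_2$), whereas you organise the same count as a column-by-column telescoping flux; the content is identical.
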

\begin{proof}
Let $A_1$, $A_2$, $B_1$, $B_2$, $C_1$, and $C_2$ be the number of vertices of the six distinct types as displayed in Figure \ref{fig:iceconfig}. Note that the number of vertices for which there is a rightward arrow entering from the west must coincide with the number of vertices for which there is a rightward arrow exiting from the east. This equality reads $A_1 + B_1 + C_1 = A_1 + B_1 + C_2$, which reduces to $C_1 = C_2$. Since the weight of a configuration only depends on $c_1$ and $c_2$ by the expression $c_1^{C_1}c_2^{C_2} = (c_1c_2)^{C_1}$, the proposition follows.
\end{proof}

Note, of course, that the anisotropy parameter $\triangle$ (defined in Equation~\eqref{eq:anis}) is invariant under proportional scaling of all six parameters, and also only depends on $c_1,c_2$ through their product $c_1c_2$.
\vspace{3mm}

Since pure snake configurations are in bijection with six-vertex configurations, there is a natural Boltzmann measure that we can place on pure snake configurations. More interesting is to construct a natural Boltzmann measure on generalised snake configurations; rather, it turns out that the natural such measure is \textit{signed}. Through the bijection $\Phi$, one can see that the natural Boltzmann measure on pure snake configurations is weighted according to the value of pairs, $\rho(v - \tfrac{1}{2}e^1)$ and $\rho(v - \tfrac{1}{2}e^2)$ over $v \in \tor_n$. However, the utility of the (signed) Boltzmann measure we place on generalised snake configurations will turn out to be that it is defined only using the information of $\orho$ at individual vertices and the number of self-intersections. 

Let $e^3 = \tfrac{1}{2}(e^1 + e^2)$, and define
\begin{align} 
    A(\orho)&= \#\{x \in \midedge_n: \orho(x) = x + e^1\}, \label{eq:Adef} \\
    B(\orho)&= \# \{x \in \midedge_n: \orho(x) = x + e^2\},  \label{eq:Bdef} \\
    C(\orho)&= \# \{x \in \midedge_n: \orho(x) = x + e^3\}, \label{eq:Cdef}
\end{align}
which count the three types of non-fixed points of $\orho$. We also let 
 \begin{align*}
    S(\orho)&= \# \{ v \in \tor_n : \rho(v - \tfrac{1}{2}e^1) = v+\tfrac{1}{2}e^1 \quad \text{and} \quad \rho(v - \tfrac{1}{2}e^2) = v+\tfrac{1}{2}e^2\} 
 \end{align*}
count the number of crossings in a generalised snake configuration $\orho$. 

Given parameters $\alpha,\beta,\gamma > 0$, we define the weight of a generalised snake configuration by 
\begin{equation} \label{eq:genweight}
    \ow(\orho) = (-1)^{\#S(\orho)}\alpha^{\#A(\orho)} \beta^{\#B(\orho)} \gamma^{\#C(\orho)}.
\end{equation}
We note that the weight $\ow(\orho)$ is negative if $\orho$ has an odd number of crossings. 

We now define a (signed) partition function and (signed) Boltzmann measure on pure snake configurations by setting
\begin{equation*}
    P_n(\orho) = \frac{\ow(\orho)}{Z^{\gen}_n}, \qquad \bar{Z}^{gen}_n = \sum_{\orho' \in \gen_n}\ow(\orho'),
\end{equation*}
provided of course that $Z^{\gen}_n \neq 0$. We will see shortly that if $\gamma^2 - \alpha \beta >0$, then $Z^{\gen}_n > 0$. We emphasise that $P_n$ is not a probability measure but a \emph{signed probability measure}, i.e., a real-valued function on the set of generalised snake configurations with the property that $\sum_{\orho \in \gen_n} P_n (\orho) = 1$.

Recall now that we have the bijection $\Phi$ from Lemma~\ref{lem:bij} and the projection $\sh$ defined in Subsection~\ref{subsec:shape}. In particular, we can define a natural composition, 
\begin{equation*}
\bigsh \coloneqq \Phi^{-1} \circ \sh \quad \text{ so that } \quad \bigsh \colon \ \gen_n \  \twoheadrightarrow \  \sixv_n.
\end{equation*}

We can push forward the signed Boltzmann measures $P_n$ on generalised snake configurations to obtain probability measures on six-vertex configurations. More specifically, given a signed probability measure $P$ on a finite set $E$ (i.e. a function $P\colon E \to \mathbb{R}$ satisfying $\sum_{e \in E} P(e) = 1$), and a subset $F$ of $E$, write $P(F) \coloneqq \sum_{f \in F} P(f)$. Now let $\phi\colon E \twoheadrightarrow E'$ be a surjective function. Now we can define a \textbf{pushforward signed measure} $\phi_\# P$ on $E'$ by letting, for $e' \in E'$, 
\begin{equation*}
    (\phi_\# P)(e') := P(\phi^{-1}(e)).
\end{equation*}

Given a signed Boltzmann measure on generalised snake configurations $P_n$, we are interested in its pushforward under the shape map. In this direction we now state and prove the following result, justifying the change in parameters in our main result.

\begin{thm} \label{thm:pushforward}
Let $\alpha,\beta,\gamma > 0$ with $\gamma^2 - \alpha\beta > 0$. Then the pushforward $\mathrm{SH}_\# P_n$ is equal to $\mathbb{P}_n$, the Boltzmann measure on six-vertex configurations on $\mathbb{T}_n$, with parameters:
\begin{align} \label{eq:pushpara}
a_1 = 1, \quad a_2 = \gamma^2-\alpha \beta, \quad b_1 = \beta, \quad b_2 = \alpha, \quad c_1 = c_2 = \gamma.
\end{align}
Note that the collection of parameters $(a_1,a_2,b_1,b_2,c_1,c_2)$ in \eqref{eq:pushpara} lie in the free-fermion regime
\begin{align*}
\triangle := \frac{a_1a_2  + b_1b_2 - c_1 c_2}{ 2 \sqrt{a_1a_2b_1b_2} } = 0.
\end{align*}
Moreover, every Boltzmann measure on six-vertex configurations on $\mathbb{T}_n$ in the free-fermion regime with positive parameters arises this way. 
\end{thm}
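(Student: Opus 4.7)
The plan is to decompose the proof into two independent parts: first the pushforward identification, then the parameter surjectivity. Both pieces boil down to explicit algebraic manipulations of weights under the bijection/projection machinery set up earlier in the section.

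For the main identification, I would first make precise what each kind of vertex $v \in \mathbb{T}_n$ contributes to $\bar{w}(\rho)$ for a pure snake $\rho = \Phi(\sigma)$. Reading off from Figure \ref{fig:arrowtosnake}, a type-1 vertex contributes no non-trivial $\bar\rho$-moves at $W_v$ or $S_v$; a type-3 vertex contributes one $e^2$-move, weight $\beta$; a type-4 vertex contributes one $e^1$-move, weight $\alpha$; types 5 and 6 each contribute one $e^3$-move, weight $\gamma$; and a type-2 vertex contributes two $e^3$-moves, weight $\gamma^2$. So letting $N_t(\sigma)$ denote the number of type-$t$ vertices in $\sigma$, we have $\bar{w}(\rho) = \gamma^{2 N_2(\sigma)} \beta^{N_3(\sigma)} \alpha^{N_4(\sigma)} \gamma^{N_5(\sigma)+N_6(\sigma)}$.

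Next I would use Remark \ref{rem:pure<->gen} together with the sign $(-1)^{S(\bar\rho)}$ in \eqref{eq:genweight} to sum $\bar{w}$ over the fibre of $\sh$. Fix $\rho \in \pure_n$. The set $\aiinew(\rho)$ consists exactly of the $N_2(\sigma)$ vertices of type 2. For each vertex $v \in \aiinew(\rho)$, the choice of whether to include $v$ in the crossing subset $S$ replaces the local contribution $\gamma^2$ by the crossing contribution, which is one $e^1$-move at $W_v$ and one $e^2$-move at $S_v$ times the sign $(-1)$, i.e., $-\alpha\beta$. All other vertices contribute the same in every element of $\sh^{-1}(\rho)$. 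Hence
\begin{equation*}
(\sh_\# \bar{w})(\rho) = \sum_{\bar\rho \in \sh^{-1}(\rho)} \bar{w}(\bar\rho) = \beta^{N_3} \alpha^{N_4} \gamma^{N_5+N_6} \sum_{S \subseteq \aiinew(\rho)} (\gamma^2)^{|\aiinew(\rho) \setminus S|} (-\alpha\beta)^{|S|} = \beta^{N_3} \alpha^{N_4} \gamma^{N_5+N_6} (\gamma^2 - \alpha\beta)^{N_2}.
\end{equation*}
Transferring along $\Phi$, this is exactly the six-vertex weight $w(\sigma)$ in \eqref{eq:weightsixv} with the parameters \eqref{eq:pushpara}. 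The hypothesis $\gamma^2 - \alpha\beta > 0$ ensures all these weights are strictly positive, so summing gives $\bar{Z}_n^{\gen} = Z_n > 0$ for that parameter choice, and the pushforward of the signed measure $P_n$ coincides with $\mathbb{P}_n$.

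The free-fermion identity $\triangle = 0$ is then immediate: substituting \eqref{eq:pushpara} into \eqref{eq:anis} the numerator becomes $1\cdot(\gamma^2 - \alpha\beta) + \beta\alpha - \gamma\cdot\gamma = 0$. For the converse claim, suppose $(a_1,\ldots,c_2) \in (0,\infty)^6$ satisfies $\triangle = 0$, i.e., $a_1 a_2 + b_1 b_2 = c_1 c_2$. By Proposition \ref{prop:c1c2} we may replace $(c_1,c_2)$ by $(\sqrt{c_1 c_2}, \sqrt{c_1 c_2})$ without changing $\mathbb{P}_n$, and by the overall scale invariance mentioned before Proposition \ref{prop:c1c2} we may rescale so that $a_1 = 1$. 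The free-fermion relation then forces $a_2 = c_1 c_2 - b_1 b_2 > 0$, and we set $\alpha := b_2$, $\beta := b_1$, $\gamma := \sqrt{c_1 c_2}$, which are positive and satisfy $\gamma^2 - \alpha\beta = a_2 > 0$, recovering exactly the parameterisation \eqref{eq:pushpara}.

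The main obstacle is really just disciplined bookkeeping: one must correctly identify which vertex types correspond to which local $\bar\rho$-moves so that the power of $\gamma$ from type-2 vertices collapses cleanly with the signed crossing contributions into the single factor $(\gamma^2 - \alpha\beta)^{N_2}$. No deeper input is needed; once the local weights are correctly enumerated, the identity is a line of bookkeeping per fibre of $\sh$, and the free-fermion condition as well as the surjectivity of the parameterisation fall out from two short algebraic rearrangements.
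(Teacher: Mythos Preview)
Your proof is correct and follows essentially the same approach as the paper's: both arguments use Remark~\ref{rem:pure<->gen} to parametrise $\sh^{-1}(\rho)$ by subsets of the type-$2$ vertices, compute the fibre sum as $(\gamma^2-\alpha\beta)^{N_2}$ times the remaining local weights, and handle the converse via scaling and Proposition~\ref{prop:c1c2}. The only difference is notational---you use $N_t(\sigma)$ where the paper uses pictorial counters like $\#\aiinew(\rho)$---but the logical content is identical.
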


The value of Theorem \ref{thm:pushforward} lies in the fact that while $P_n$ is not a probability measure in the usual sense, it has a pliant determinantal correlation structure which we describe below. We can study this determinantal structure under the $\mathrm{SH}$ map to obtain explicit information about the determinantal structure of the six-vertex model on $\mathbb{T}_n$.

\begin{proof}
Given parameters $\alpha,\beta,\gamma > 0$ with $\gamma^2 - \alpha\beta > 0$, let us define the \emph{pure weight} of a pure snake configuration by
\begin{align}\label{eq:weightsum}
w'(\rho) := \sum_{ \orho \in \mathrm{sh}^{-1}(\rho)} \ow (\orho).
\end{align}
Consider the following pictoral variables for the six possible vertex events expressed in terms of $\rho$.
\begin{align*}
    \ainew(\rho) &= \{ v \in \tor_n : \rho(v - \tfrac{1}{2}e^1) = v - \tfrac{1}{2}e^1 \text{ and } \rho(v - \tfrac{1}{2}e^2) = v - \tfrac{1}{2}e^2\}\\
    \aiinew(\rho) &= \{ v \in \tor_n : \rho(v - \tfrac{1}{2}e^1) = v + \tfrac{1}{2}e^2 \text{ and } \rho(v - \tfrac{1}{2}e^2) = v + \tfrac{1}{2}e^1\}\\
    \biinew(\rho) &= \{v \in \tor_n: \rho(v - \tfrac{1}{2}e^1) = v + \tfrac{1}{2}e^1\},\\
    \binew(\rho) &= \{v \in \tor_n: \rho(v - \tfrac{1}{2}e^2) = v + \tfrac{1}{2}e^2\},\\
    \cinew(\rho) &= \{v \in \tor_n: \rho(v - \tfrac{1}{2}e^2) = v + \tfrac{1}{2}e^1 \text{ and } \rho(v - \tfrac{1}{2}e^1) = v - \tfrac{1}{2}e^1\},\\
    \ciinew(\rho) &= \{v \in \tor_n: \rho(v - \tfrac{1}{2}e^2) = v - \tfrac{1}{2}e^2 \text{ and } \rho(v - \tfrac{1}{2}e^1) = v + \tfrac{1}{2}e^2\}.
\end{align*}
We can evaluate $w'(\rho)$ explicitly in terms of these quantities. Indeed, by virtue of Remark \ref{rem:pure<->gen} we have
\begin{align*}
    w'(\rho) &= \sum_{S \subset \aiinew(\rho)} (-\alpha\beta)^{\# S} (\gamma^2)^{\#\aiinew(\rho) - \# S} \alpha^{\#\biinew(\rho)} \beta^{\#\binew(\rho)} \gamma^{\#\cinew(\rho)+\#\ciinew(\rho)},\\
    &= (\gamma^2 - \alpha \beta)^{\#\aiinew(\rho)} \alpha^{\#\biinew(\rho)}\beta^{\#\binew(\rho)}\gamma^{\#\cinew(\rho)+\#\ciinew(\rho)}.
\end{align*}
Using the correspondence 
\begin{align*}
\Phi^{-1}: \pure_n \longleftrightarrow \sixv_n,
\end{align*}
we see that $w'(\rho)$ is precisely the weight of the corresponding six-vertex configuration, associated with the parameters in \eqref{eq:pushpara}. That is,
\begin{equation*}
w'(\rho) = w(\Phi^{-1}(\rho)).
\end{equation*}

To see that every Boltzmann measure on $\sixv_n$ in the free-fermion regime parametrises this way, first recall that we can freely scale our parameters such that $a_1 = 1$ and denote $b_1 = \beta$ and $b_2 = \alpha$. Secondly, apply Proposition~\ref{prop:c1c2} so that we may set $c_1 = c_2$ both to some $\gamma$. Finally, $\triangle = 0$ forces $a_2 = \gamma^2 - \alpha \beta$.
\end{proof}

Let us highlight a simple corollary of this pushforward.

\begin{cor}\label{cor:samepartfn}
    For the six-vertex model with the free-fermion parametrisation~\eqref{eq:pushpara}, and the generalised snake model with parameters $\alpha$, $\beta$ and $\gamma$, their corresponding partition functions $Z_n^{\gen}$ and $Z_n$ are equal. 
\end{cor}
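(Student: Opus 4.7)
The plan is to reduce the claim to two ingredients already established in (or immediately above) Theorem~\ref{thm:pushforward}: namely, the partitioning of $\gen_n$ by fibres of $\sh$ from Remark~\ref{rem:partGS}, and the key weight identity $w'(\rho) = w(\Phi^{-1}(\rho))$ derived in the proof of Theorem~\ref{thm:pushforward}. No new computation should be required; this is essentially a bookkeeping consolidation.

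First, I would unfold the definition of $Z_n^{\gen}$ and use Remark~\ref{rem:partGS} to regroup the sum over generalised snakes by the pure snake they project to:
\begin{align*}
Z_n^{\gen} \;=\; \sum_{\orho \in \gen_n} \ow(\orho) \;=\; \sum_{\rho \in \pure_n} \sum_{\orho \in \mathrm{sh}^{-1}(\rho)} \ow(\orho) \;=\; \sum_{\rho \in \pure_n} w'(\rho),
\end{align*}
where the last equality is simply the definition \eqref{eq:weightsum} of the pure weight $w'(\rho)$.

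Second, I would invoke the identity $w'(\rho) = w(\Phi^{-1}(\rho))$ established in the proof of Theorem~\ref{thm:pushforward}; this is the content of the explicit evaluation of $w'(\rho)$ via the binomial expansion over $S \subset \aiinew(\rho)$ that collapses $\sum_{S}(-\alpha\beta)^{\#S}(\gamma^2)^{\#\aiinew(\rho)-\#S}$ to $(\gamma^2-\alpha\beta)^{\#\aiinew(\rho)}$, matching exactly the six-vertex weight with the parameters \eqref{eq:pushpara}. Plugging this in and using that $\Phi \colon \sixv_n \leftrightarrow \pure_n$ is a bijection (Lemma~\ref{lem:bij}), I would conclude
\begin{align*}
Z_n^{\gen} \;=\; \sum_{\rho \in \pure_n} w(\Phi^{-1}(\rho)) \;=\; \sum_{\sigma \in \sixv_n} w(\sigma) \;=\; Z_n,
\end{align*}
as desired.

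There is no real obstacle here: the hypothesis $\gamma^2 - \alpha\beta > 0$ ensures all terms are nonnegative after the cancellation of signs from crossings, so there is no issue with $Z_n^{\gen} = 0$ or with reinterpreting the signed sum as a genuine partition function. The only point to be careful about is that the cancellation of negative-weight generalised snake configurations (coming from the factor $(-1)^{\#S(\orho)}$ in \eqref{eq:genweight}) against positive-weight ones happens \emph{within} each fibre $\mathrm{sh}^{-1}(\rho)$, which is exactly what the identity $w'(\rho) = w(\Phi^{-1}(\rho))$ encodes; once this is in hand, the corollary follows by a one-line manipulation.
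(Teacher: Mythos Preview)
Your proof is correct and follows exactly the same approach as the paper's own proof: partition $\gen_n$ into fibres of $\sh$ via Remark~\ref{rem:partGS}, collapse each fibre sum to $w'(\rho)$ via \eqref{eq:weightsum}, apply the identity $w'(\rho)=w(\Phi^{-1}(\rho))$ from the proof of Theorem~\ref{thm:pushforward}, and reindex through the bijection $\Phi$. The paper presents this as a single displayed chain of equalities; your version simply spells out the same steps with a bit more commentary.
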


\begin{proof}
    By Remark~\ref{rem:partGS}, Equation~\eqref{eq:weightsum} and the bijection $\Phi$, we can write
    \begin{equation*} 
    Z^{\gen}_n \coloneqq \sum_{ \orho \in \gen_n } \ow(\orho) =  \sum_{ \rho \in \pure_n } \sum_{ \orho \in \sh^{-1}(\rho) } \ow(\orho) =  \sum_{ \rho \in \pure_n }  w'(\rho) = \sum_{ \sigma \in \sixv } w(\sigma) \eqqcolon Z_n.
    \end{equation*}
\end{proof}

This, of course, justifies our earlier claim that $Z_n^\gen$ is positive when $\gamma^2 - \alpha \beta > 0$, as $Z_n$ is certainly positive.

\section{Kasteleyn Theory of the Snake Model}\label{sec:kast}

Kasteleyn theory was introduced by P.~W.~Kasteleyn in the 1960s in his solution to the dimer-counting problem on planar graphs, using Pfaffian methods and what are now called Kasteleyn orientations \cite{kasteleyn1961, kasteleyn1967}. It has since played a central role in the analysis of planar dimer and tiling models, including later developments in integrable probability and limit-shape phenomena \cite{CKP}.

In the previous section, we proved that every Boltzmann measure on six-vertex configurations on $\tor_n$ with positive parameters in the free-fermion regime arises as a pushforward of a signed Boltzmann measure $P_n$ on generalised snake configurations on $\midedge_n$. In this section, we begin by applying techniques from Kasteleyn theory to evaluate the partition function $Z_n^{\gen}$ and the correlations of the signed probability measure $P_n$ explicitly.

\subsection{The Kasteleyn operators}

For each $\theta = (\theta_1,\theta_2) \in \{0,1\}^2$, define the operator $K_\theta \colon \midedge_n \times \midedge_n \to \mathbb{C}$ by
\begin{equation}\label{eq:Kdef}
    K_\theta(x,y)=
    \begin{cases}
    1, &  \text{if $x=y$},\\[6pt]
    \alpha \exp\left(\frac{\pi \iota \theta_1}{n}\right), &\text{if $y = x + e^1$ and $x$ black},\\[6pt]
    \beta\exp\left(\frac{\pi \iota \theta_2}{n}\right), &\text{if $y = x + e^2$ and $x$ white,}\\[6pt]
    \gamma \exp\left(\frac{\pi \iota}{2n}(\theta_1 + \theta_2)\right), &\text{if $y = x + (\tfrac{1}{2},\tfrac{1}{2})$}.
    \end{cases}
\end{equation}

Recall from Corollary~\ref{cor:samepartfn} that under the parameter relation \eqref{eq:pushpara}, the partition functions $Z_n$ and $Z_n^{\gen}$ are equal. Our foundational result states that this partition function can be written as a sum of four determinants.

\begin{thm}\label{thm:partfn}
The partition function $Z^{\gen}_n$ has the explicit formula
    \begin{equation*}
        Z^{\gen}_n = \sum_{\theta \in \{0,1\}^2 } C_\theta \det(K_\theta)
    \end{equation*}
    where $C_\theta = \frac{1}{2}(-1)^{(\theta_1 + n + 1)(\theta_2 + n + 1)}$, and $K_\theta$ is defined in Equation \eqref{eq:Kdef}.
\end{thm}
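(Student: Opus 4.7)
My plan is to apply the Leibniz expansion to each $\det K_\theta$, identify that the only contributing permutations are precisely the generalised snake configurations, and then balance three independent sources of signs: the sign of $\orho$ as a permutation, the phases $e^{\pi \iota \theta_i / n}$ built into the Kasteleyn weights, and the crossing-dependent sign $(-1)^{\#S(\orho)}$ in $\ow(\orho)$. The coefficients $C_\theta$ will be forced on us by the requirement that these three contributions telescope to $\ow(\orho)$.

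\medskip

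\noindent\textbf{Step 1 (Reduction to snake configurations).} Expand
\begin{equation*}
\det K_\theta = \sum_{\pi \in \Sym(\midedge_n)} \sgn(\pi) \prod_{x \in \midedge_n} K_\theta(x,\pi(x)).
\end{equation*}
Inspecting \eqref{eq:Kdef}, the product vanishes unless every $\pi(x)$ is among the four allowed transitions for $x$, which by Definition \ref{df:snake} says $\pi \in \gen_n$.

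\smallskip

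\noindent\textbf{Step 2 (Phase extraction).} For $\orho \in \gen_n$, the modulus of the product is $\alpha^{A(\orho)}\beta^{B(\orho)}\gamma^{C(\orho)}$, and the $\theta$-dependent phase assembles to
\begin{equation*}
\Psi_\theta(\orho) = \exp\!\Bigl( \tfrac{\pi \iota \theta_1}{n}(A(\orho) + \tfrac{1}{2}C(\orho)) + \tfrac{\pi \iota \theta_2}{n}(B(\orho) + \tfrac{1}{2}C(\orho)) \Bigr).
\end{equation*}
Since every cycle of $\orho$ must close on $\tor_n$, both $A(\orho) + \tfrac{1}{2}C(\orho)$ and $B(\orho) + \tfrac{1}{2}C(\orho)$ are integer multiples of $n$, say $n h_1(\orho)$ and $n h_2(\orho)$; hence $\Psi_\theta(\orho) = (-1)^{\theta_1 h_1(\orho) + \theta_2 h_2(\orho)}$ depends only on the homology class $(h_1(\orho),h_2(\orho)) \in (\ZZ/2)^2$ of the cycle collection.

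\smallskip

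\noindent\textbf{Step 3 (Sign of the permutation).} Let $\rho = \sh(\orho)$. Each crossing replaces a pair of diagonal moves by a pair of straight moves, which, read as a modification of the permutation, is composition with a single transposition of the two adjacent mid-edges; hence $\sgn(\orho) = (-1)^{\#S(\orho)} \sgn(\rho)$. For the pure $\rho$, I decompose into cycles: contractible loops on the torus have even length (they enclose an even number of vertices in a planar region, by an Euler-characteristic/parity argument), so contribute $-1$ each, while loops of winding class $(h_1^c, h_2^c)$ contribute a sign depending on the parity of their length, which in turn depends on $h_1^c$, $h_2^c$, and the parity of $n$. Aggregating these, one obtains
\begin{equation*}
\sgn(\rho) = \epsilon(h_1(\rho), h_2(\rho); n)
\end{equation*}
for an explicit sign $\epsilon \colon (\ZZ/2)^2 \to \{\pm 1\}$ that is exactly the standard Kasteleyn torus correction.

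\smallskip

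\noindent\textbf{Step 4 (Collapse of the $\theta$-sum).} Combining the steps,
\begin{equation*}
\sum_{\theta} C_\theta \det K_\theta = \sum_{\orho \in \gen_n} (-1)^{\#S(\orho)} \alpha^{A(\orho)}\beta^{B(\orho)}\gamma^{C(\orho)} \sgn(\rho) \sum_{\theta} C_\theta (-1)^{\theta_1 h_1(\orho) + \theta_2 h_2(\orho)}.
\end{equation*}
A short direct computation in each of the four homology classes, splitting on the parity of $n$, shows that with $C_\theta = \tfrac{1}{2}(-1)^{(\theta_1+n+1)(\theta_2+n+1)}$ the inner sum equals $\epsilon(h_1,h_2;n)$, which cancels $\sgn(\rho)$ and leaves exactly $\ow(\orho)$, summing to $Z_n^{\gen}$.

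\medskip

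The main obstacle is Step 3: controlling $\sgn(\rho)$ for pure snake configurations in terms of homology. Contractible cycles need a separate parity argument (the most natural route is to cut the torus into a planar strip and argue by induction on the number of nontrivial loops, peeling off contractible cycles using the snake combinatorics). The nontrivial loops then have to be analysed for length parity in terms of winding and the parity of $n$, which is the delicate bookkeeping that the explicit form of $C_\theta$ is tailor-made to absorb. Step 4 is then a routine $2\times 2$ sign check, but only becomes routine once Step 3 has pinned down $\epsilon$.
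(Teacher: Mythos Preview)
Your overall architecture matches the paper's: Leibniz expansion, restriction to $\gen_n$, extraction of the $\theta$-phase, reduction of $\sgn(\orho)$ to $\sgn(\sh(\orho))$ via crossings, and then a sign identity that the coefficients $C_\theta$ are designed to absorb. Steps 1, 2 and the crossing reduction in Step 3 are all correct and essentially identical to what the paper does.

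The gap is in the second half of Step 3. You assert that after aggregating the per-cycle signs one obtains $\sgn(\rho)=\epsilon(h_1(\rho),h_2(\rho);n)$, a function of the \emph{total} homology class alone. But the product over cycles gives $\sgn(\rho)=(-1)^{n(h_1+h_2)+r}$, where $r$ is the number of non-trivial cycles, and $r$ is \emph{not} determined by $(h_1,h_2)\bmod 2$ without an extra topological input. For instance, a single cycle of winding $(1,1)$ and a pair of cycles of windings $(1,0)$ and $(0,1)$ would both give $(h_1,h_2)\equiv(1,1)$ yet have opposite signs. What rules this out --- and what your outline does not supply --- is the fact (Lemma~\ref{lem:snakewind} in the paper) that in a pure snake configuration \emph{all} non-trivial cycles share a common winding $(q_1,q_2)$ with $\gcd(q_1,q_2)=1$. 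This holds because the cycles are pairwise disjoint simple closed curves on the torus (purity forbids crossings), and disjoint non-contractible simple closed curves on a torus are homotopic; the coprimality is the standard fact that a simple torus curve has primitive homology class. Once you know $h_i=rq_i$ with $(q_1,q_2)$ coprime, the parity of $r$ \emph{is} recoverable from $(h_1,h_2)\bmod 2$, and your $\epsilon$ becomes well-defined; the paper carries out exactly this computation.

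A minor point: your handling of contractible cycles is vacuous. Since every step of a snake moves strictly east/north/north-east, any non-fixed cycle has $A+B+C>0$ and hence $(q_1,q_2)\neq(0,0)$; there are no contractible non-trivial loops, so the ``Euler-characteristic/parity'' argument is unnecessary (and would in fact break your claimed $\epsilon$ if such loops existed, since each would flip $r$ without touching $(h_1,h_2)$).
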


Recall that for $\orho \in \gen_n$, $\orho$ is a permutation on $\midedge_n$ and therefore has a parity. Recall further that from \eqref{eq:Adef}, \eqref{eq:Bdef} and \eqref{eq:Cdef}, the integers $A(\orho), B(\orho)$ and $C(\orho)$ count the number of east, north and north-east connections between mid-edges. The main step in the proof of Theorem \ref{thm:partfn} is the following lemma, which says that the sign of $\orho$ can be expressed as a sum involving $A(\orho),B(\orho)$, and $C(\orho)$.

\begin{lemma}[Kasteleyn Lemma]\label{lem:sgngensnake}
    For all $\orho \in \gen_n$,
    \begin{equation}\label{eq:kastlemma1}
        \sgn(\orho)(-1)^{\#S(\orho)} = \sum_{\theta \in \Theta} C_\theta \exp\left(\frac{\pi \iota \theta_1}{n}\left(A(\orho) + \tfrac{1}{2}C(\orho)\right) + \frac{\pi \iota \theta_2}{n}\left(B(\orho) + \tfrac{1}{2}C(\orho)\right)\right),
    \end{equation}
    where $C_\theta = \frac{1}{2}(-1)^{(\theta_1 + n + 1)(\theta_2 + n + 1)}$.
\end{lemma}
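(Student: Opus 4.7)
My plan is to pull the identity back from $\gen_n$ to the underlying pure snake $\rho := \sh(\orho)\in\pure_n$ and then exploit the topology of $\tor_n$. The first step is to verify that $\sgn(\orho)(-1)^{S(\orho)} = \sgn(\rho)$: at each of the $S(\orho)$ crossings $v$, we have $\orho(W_v)=E_v$ and $\orho(S_v)=N_v$, whereas $\rho(W_v)=N_v$ and $\rho(S_v)=E_v$, so $\rho\circ\orho^{-1}$ equals the product of the $S(\orho)$ pairwise disjoint transpositions $(E_v,N_v)$. Moreover, uncrossing preserves the local displacement (both $e^1+e^2$ and $e^3+e^3$ equal $(1,1)$), so $A(\orho)+\tfrac12 C(\orho) = A(\rho)+\tfrac12 C(\rho)$ and similarly for $B$. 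Since the cycles of $\rho$ close up on $\tor_n$, these totals are integer multiples $nW_1$ and $nW_2$ of $n$, where $(W_1,W_2)$ is the total winding of $\rho$. The right-hand side of \eqref{eq:kastlemma1} therefore collapses to the finite character sum $\sum_\theta C_\theta (-1)^{\theta_1 W_1+\theta_2 W_2}$.

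Next I analyse the cycle structure of $\rho\in\pure_n$. Every step $\rho(x)-x$ lies in $\{0,e^1,e^2,e^3\}\subset\mathbb{R}_{\geq 0}^2$, so each non-trivial cycle has winding $(w_1,w_2)\in\mathbb{Z}_{\geq 0}^2\setminus\{(0,0)\}$. The key geometric observation is that in a pure-snake configuration every local vertex picture is non-crossing: even the $a_2$-type vertex, which carries both $W_v\to N_v$ and $S_v\to E_v$, realises these as two parallel slope-$+1$ diagonals passing on opposite sides of $v$. Hence the cycles, drawn as curves on $\tor_n$, are simple and pairwise disjoint. Disjoint simple closed curves on the torus have zero algebraic intersection, so their homology classes are parallel; together with the non-negativity of the windings, this forces all non-trivial cycles to share a single primitive winding $(p,q)$ with $\gcd(p,q)=1$. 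Summing the east/north/diagonal step counts $(a,b,c)$ along any such cycle gives $a+\tfrac12c=np$ and $b+\tfrac12c=nq$, whence the length is $a+b+c=n(p+q)$. Writing $k$ for the number of non-trivial cycles, this yields $(W_1,W_2)=k(p,q)$ and
\[
\sgn(\rho) = \bigl((-1)^{n(p+q)-1}\bigr)^{k} = (-1)^{n(W_1+W_2)+k}.
\]

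The final step is a direct character computation. Expanding $C_\theta = \tfrac12(-1)^{(\theta_1+n+1)(\theta_2+n+1)}$ yields
\[
\sum_\theta C_\theta (-1)^{\theta_1 W_1+\theta_2 W_2} = \tfrac12\Bigl((-1)^{n+1}+(-1)^{W_1}+(-1)^{W_2}+(-1)^{n+W_1+W_2}\Bigr),
\]
which depends only on $(W_1,W_2)\bmod 2$. Since $\gcd(p,q)=1$, the residue $(p,q)\bmod 2$ takes one of the three values $(1,0),(0,1),(1,1)$, and $(W_1,W_2)\equiv k(p,q)\bmod 2$ runs over a short list indexed by these three residues and the parity of $k$. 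A direct check (with the degenerate case $k=0$, $W=(0,0)$ giving $1$ on both sides) matches the sum against $(-1)^{n(W_1+W_2)+k}$ from the previous step.

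The main obstacle is the topological content of the middle paragraph: establishing that the cycles of a pure snake are disjoint simple loops on $\tor_n$ whose homology classes are forced to coincide with a common primitive vector. Ruling out an apparent crossing at an $a_2$-type vertex is genuinely geometric rather than purely combinatorial, and the rigidity it produces (all $(w_1,w_2)$ equal to one fixed primitive $(p,q)$) is what reduces the Kasteleyn identity to a four-term character identity on $(\mathbb{Z}/2)^2$. Once that rigidity is in hand, both the permutation-sign bookkeeping of the first paragraph and the character matching of the third are routine.
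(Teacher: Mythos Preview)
Your proposal is correct and follows essentially the same route as the paper's proof: reduce from $\gen_n$ to $\pure_n$ by tracking how each crossing affects both the sign and the step counts, invoke the topological fact that all non-trivial cycles of a pure snake share a common primitive winding $(p,q)$ (the paper isolates this as Lemma~\ref{lem:snakewind}, citing \cite{JS}, while you sketch it via the intersection form), compute $\sgn(\rho)$ from the common cycle length $n(p+q)$, and finish with a parity check over $\theta\in\{0,1\}^2$. Your reduction step handles all crossings at once via $\rho\circ\orho^{-1}$ rather than inductively one crossing at a time, and your final character computation is written out more explicitly than the paper's, but neither constitutes a genuinely different argument.
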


The proof of this lemma follows \emph{mutatis mutandis} from a very similar result in a recent preprint by the authors \cite[Lemma 3.5]{JS}. 

\begin{proof}
First of all, we claim that if the result holds for all pure snake configurations, then it holds for all generalised snake configurations. To see this, it suffices to show that if we replace any local configuration of the form $\aiinew$ by a crossing $\crossing$, the result still holds. Indeed, say $\orho$ and $\orho'$ are generalised snake configurations such that $\orho'$ is the same as $\orho$ but with one configuration of the form $\aiinew$ replaced by a crossing $\crossing$. Then
    \begin{itemize}
        \item $\orho'$ is $\orho$ composed with a transposition, so $\sgn(\orho') = -\sgn(\orho)$,
        \item $\#S(\orho') = \#S(\orho) + 1$, so $(-1)^{\#S(\orho')} = -(-1)^{\#S(\orho)}$,
        \item $\#A(\orho') = \#A(\orho) + 1$ and $\#B(\orho') = \#B(\orho) + 1$, whilst $\#C(\orho') = \#C(\orho) - 2$.
    \end{itemize}
    Overall, this means that if Equation~\eqref{eq:kastlemma1} holds for $\orho$, then it holds for $\orho'$. So it suffices to prove the result for pure snake configurations, that is, we want to show that
    \begin{equation}\label{eq:kastlemma2}
        \sgn(\rho) = \sum_{\theta \in \Theta} C_\theta \exp\left(\frac{\pi \iota \theta_1}{n}\left(A(\rho) + \tfrac{1}{2}C(\rho)\right) + \frac{\pi \iota \theta_2}{n}\left(B(\rho) + \tfrac{1}{2}C(\rho)\right)\right),
    \end{equation}
    for any $\rho \in \pure_n$.

    To complete the proof, we need to make use of a topological lemma; see Section 3.4 of \cite{JS} for further information on the details.
    \begin{lemma}\label{lem:snakewind}
        Let $\rho \in \pure_n$ have disjoint cycle decomposition $\rho = c_1 c_2\dots c_r$. For a cycle $c$ on $\midedge_n$, define its \textit{horizontal} and \textit{vertical winding numbers} by, respectively,
        \begin{equation*}
            q_1(c) = \frac{A(c) + \tfrac{1}{2}C(c)}{n}, \qquad q_2(c) = \frac{B(c) + \tfrac{1}{2}C(c)}{n}.
        \end{equation*}
        Then there exist coprime non-negative integers $q_1,q_2$ such that $q_i(c_j) = q_i$ for all $i = 1,2$ and $1 \leq j \leq r$.
    \end{lemma}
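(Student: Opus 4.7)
The strategy is topological: realise each cycle as a closed curve on the torus, use the strict monotonicity of $\rho$ to read off its winding numbers, and then apply two classical facts about simple closed curves on $T^2$. First, I would lift each cycle $c_j$ to the universal cover $\mathbb{R}^2$ of $\mathbb{T}_n$. Since the only non-identity moves of $\rho$ are $+e^1$, $+e^2$, and $+e^3 = (\tfrac{1}{2},\tfrac{1}{2})$, traversing $c_j$ once accumulates a net displacement $(A(c_j) + \tfrac{1}{2} C(c_j),\, B(c_j) + \tfrac{1}{2} C(c_j))$ in the universal cover. Closure of the cycle on the torus forces this displacement to lie in $n\mathbb{Z} \times n\mathbb{Z}$, and monotonicity of the moves makes both coordinates non-negative. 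Integrality of $\tfrac{1}{2} C(c_j)$ follows because $+e^3$-moves are exactly the colour-changing moves, so $C(c_j)$ is even whenever the cycle closes up. This establishes $q_i(c_j) \in \mathbb{Z}_{\geq 0}$.

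Next, I would realise each non-fixed cycle $c_j$ geometrically as a simple closed curve $\gamma_j$ on $T^2 := \mathbb{R}^2 / n\mathbb{Z}^2$, with homology class $(q_1(c_j), q_2(c_j)) \in H_1(T^2;\mathbb{Z}) \cong \mathbb{Z}^2$. The purity of $\rho$ guarantees that distinct cycles never cross transversally: the only vertex type at which two cycles share a point is type $2$, at which two arcs turn through the vertex without crossing, and an arbitrarily small isotopy separates them without altering any free-homotopy class. Hence the family $\{\gamma_j\}$ may be represented by pairwise-disjoint simple closed curves on $T^2$.

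Finally, I would invoke two standard facts about the 2-torus: (i) a simple closed curve on $T^2$ is either null-homotopic or represents a primitive class in $H_1(T^2;\mathbb{Z})$; and (ii) two disjoint, non-null-homotopic simple closed curves on $T^2$ cobound an annulus and are therefore freely homotopic. Each non-fixed $\gamma_j$ has strictly positive displacement in at least one coordinate, so it is not null-homotopic. Fact (i) then gives $\gcd(q_1(c_j), q_2(c_j)) = 1$ for each such $j$, and fact (ii) forces the pair to be independent of $j$; their common value is the desired $(q_1, q_2)$. Fixed-point cycles of $\rho$, which have $(q_1, q_2) = (0,0)$, are consistent with any such choice.

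The main obstacle is the second step: carefully translating the combinatorial no-crossing condition into an honest statement about pairwise-disjoint simple closed curves on $T^2$ up to isotopy. This requires a local case analysis at each vertex type to verify that no transverse intersections ever arise — either self-intersections of a single $\gamma_j$ or mutual intersections between distinct $\gamma_j$, $\gamma_k$. Once this geometric picture is in place, the two classical topological facts close the argument immediately. The analogous combinatorial-to-topological translation is carried out in detail in \cite[Section 3.4]{JS}, and can be imported here with only minor modifications.
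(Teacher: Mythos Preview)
Your proposal is correct and follows essentially the same route as the paper, which does not prove the lemma in place but defers to the topological argument in \cite[Section~3.4]{JS}; your outline (lift to the universal cover to read off winding numbers, realise the non-crossing cycles as disjoint simple closed curves on $T^2$, then invoke primitivity and the annulus lemma) is precisely that argument. One small correction: your final sentence about fixed-point cycles being ``consistent with any such choice'' is not right --- a fixed point has $(q_1,q_2)=(0,0)$, which would contradict the common coprime value --- but the issue is moot because the disjoint cycle decomposition here excludes fixed points (as the paper's subsequent identity $|c_i|=n(q_1+q_2)$ confirms), so that line should simply be dropped.
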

    
    If $\rho$ has disjoint cycle decomposition $c_1c_2\dots c_r$, then it has sign $(-1)^{\sum_{i=1}^r (|c_i| + 1)}$ where $|c_i|$ denotes the length of cycle $c_i$. In particular, $|c_i| = A(c_i) + B(c_i) + C(c_i) = n(q_1(c_i) + q_2(c_i))$, so using Lemma~\ref{lem:snakewind}, we can write
    \begin{equation*}
        \sgn(\rho) = (-1)^{rn(q_1 + q_2) + r},
    \end{equation*}
    where $q_1$ and $q_2$ are the common values of $q_1(c_i)$ and $q_2(c_i)$ over $i$, respectively.

    We can simplify the RHS of Equation~\eqref{eq:kastlemma2} in the same manner. Using the definition of $C_\theta$ and using the integrality of $q_1$ and $q_2$, we have that it can be rewritten as
    \begin{align*}
        &= \sum_{\theta \in \Theta} \tfrac{1}{2}(-1)^{(\theta_1 + n + 1)(\theta_2 + n + 1)} (-1)^{rq_1\theta_1 + rq_2 \theta_2}\\
        &= \sum_{\theta_2 \in \{0,1\}}(-1)^{(n+1)(\theta_2 + n + 1) + rq_2\theta_2}\left(\sum_{\theta_1 \in \{0,1\}} \tfrac{1}{2}(-1)^{\theta_1(\theta_2 + n + 1 + rq_1)}\right) \\ 
        &= \sum_{\theta_2 \in \{0,1\}}(-1)^{(n+1)(\theta_2 + n + 1) + rq_2\theta_2}\mathrm{1}\{\theta_2 + n + 1 + rq_1 \equiv 0 \bmod{2}\}\\
        &= (-1)^{rq_1(n+1) + rq_2(n + 1 + q_1)}\\
        &= (-1)^{rn(q_1 + q_2)+ rq_1 + rq_2 + rq_1q_2}.
    \end{align*}
    However, we can conclude by noting that $rq_1 + rq_2 + rq_1q_2 \equiv r \bmod 2$ always holds whenever at least one of $q_1$ and $q_2$ is odd, which is certainly the case when they are coprime, as given by Lemma~\ref{lem:snakewind}.
    
\end{proof}

Recall from \eqref{eq:genweight} that the weight of a generalised snake configuration $\orho$ is given by
\begin{align*}
    \ow(\orho) = (-1)^{\#S(\orho)}\alpha^{\#A(\orho)} \beta^{\#B(\orho)} \gamma^{\#C(\orho)}.
\end{align*}

Before proceeding with the proof of Theorem \ref{thm:partfn}, we note here that Lemma \ref{lem:sgngensnake} gives a useful expression for the weight, which we record as an intermediate step.

\begin{lemma}\label{lem:weightform}
We have 
\begin{equation*}
\ow(\orho) = \mathrm{sgn}(\bar{\rho}) \sum_{ \theta \in \{0,1\}^2 } C_\theta \prod_{ x \in \midedge_n} K_\theta(x,\bar{\rho}(x)).
\end{equation*}
\end{lemma}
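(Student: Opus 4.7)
The plan is a direct computation: unpack the product on the right-hand side by sorting contributions according to which of the four possible values $\bar\rho(x) \in \{x,\, x+e^1,\, x+e^2,\, x+e^3\}$ (with $e^3 = \tfrac12(e^1+e^2)$) takes, and then recognise the trigonometric sum over $\theta$ as the expression appearing in the Kasteleyn Lemma.

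Concretely, I would first observe that the definition \eqref{eq:Kdef} of $K_\theta$ implies that for any $\bar\rho \in \gen_n$,
\begin{align*}
\prod_{x \in \midedge_n} K_\theta(x,\bar\rho(x))
= \alpha^{\#A(\bar\rho)}\beta^{\#B(\bar\rho)}\gamma^{\#C(\bar\rho)} \exp\!\left(\frac{\pi\iota\theta_1}{n}\bigl(A(\bar\rho)+\tfrac{1}{2}C(\bar\rho)\bigr) + \frac{\pi\iota\theta_2}{n}\bigl(B(\bar\rho)+\tfrac{1}{2}C(\bar\rho)\bigr)\right),
\end{align*}
since fixed points of $\bar\rho$ contribute a factor of $1$, each east-shift contributes $\alpha e^{\pi\iota\theta_1/n}$, each north-shift contributes $\beta e^{\pi\iota\theta_2/n}$, and each diagonal-shift contributes $\gamma e^{\pi\iota(\theta_1+\theta_2)/(2n)}$. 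This step is entirely bookkeeping and invokes the definitions \eqref{eq:Adef}, \eqref{eq:Bdef}, \eqref{eq:Cdef}.

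Next, I would multiply both sides by $C_\theta$ and sum over $\theta \in \{0,1\}^2$. The prefactor $\alpha^{\#A}\beta^{\#B}\gamma^{\#C}$ pulls out of the sum, and what remains is exactly the quantity appearing on the right-hand side of \eqref{eq:kastlemma1}. Applying Lemma~\ref{lem:sgngensnake} therefore gives
\begin{align*}
\sum_{\theta \in \{0,1\}^2} C_\theta \prod_{x \in \midedge_n} K_\theta(x,\bar\rho(x))
= \mathrm{sgn}(\bar\rho)\,(-1)^{\#S(\bar\rho)} \alpha^{\#A(\bar\rho)}\beta^{\#B(\bar\rho)}\gamma^{\#C(\bar\rho)}
= \mathrm{sgn}(\bar\rho)\,\bar w(\bar\rho),
\end{align*}
by the definition \eqref{eq:genweight} of $\bar w(\bar\rho)$. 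Multiplying by $\mathrm{sgn}(\bar\rho)$ and using $\mathrm{sgn}(\bar\rho)^2 = 1$ yields the claim.

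There is no real obstacle here: the lemma is essentially a reformulation of Lemma~\ref{lem:sgngensnake}, with the trigonometric phases on the right-hand side of \eqref{eq:kastlemma1} having been absorbed into the Kasteleyn weights of \eqref{eq:Kdef}. The only thing to verify carefully is that the phase assignments in \eqref{eq:Kdef} precisely reproduce the exponents $\tfrac{\pi\iota\theta_1}{n}(A+\tfrac12 C) + \tfrac{\pi\iota\theta_2}{n}(B+\tfrac12 C)$ when one takes the product over $x$; this is what dictates the seemingly asymmetric choice of $e^{\pi\iota(\theta_1+\theta_2)/(2n)}$ for the diagonal edges.
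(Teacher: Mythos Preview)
Your proof is correct and follows essentially the same route as the paper's: both compute the product $\prod_x K_\theta(x,\bar\rho(x))$ explicitly in terms of $A,B,C$ and the $\theta$-dependent phases, then invoke Lemma~\ref{lem:sgngensnake} to collapse the $\theta$-sum. The paper phrases the first step as factoring $K_\theta = e^{F_\theta} K_{00}$ before identifying $\prod_x K_{00}(x,\bar\rho(x)) = \alpha^{\#A}\beta^{\#B}\gamma^{\#C}$, whereas you merge these into a single calculation, but the argument is otherwise identical.
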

\begin{proof}

Let 
    \begin{equation*}
        F_\theta(\orho) = \frac{\pi \iota \theta_1}{n}\left(A(\orho) + \tfrac{1}{2}C(\orho)\right) + \frac{\pi \iota \theta_2}{n}\left(B(\orho) + \tfrac{1}{2}C(\orho)\right)
    \end{equation*}
be the exponentiated factor appearing in Lemma \ref{lem:sgngensnake}. Then we can note that
\begin{equation*}
\prod_{ x \in \midedge_n} K_\theta(x,\bar{\rho}(x)) = \exp( F_\theta(\orho)) \prod_{x \in \midedge_n}K_{00}(x,\bar{\rho}(x)).
\end{equation*}
Hence, by Lemma~\ref{lem:sgngensnake},
\begin{align*}
    \sgn(\orho) \sum_{ \theta \in \{0,1\}^2 } C_\theta \prod_{ x \in \midedge_n} K_\theta(x,\bar{\rho}(x)) &= \sgn(\orho) \sum_{ \theta \in \{0,1\}^2 } C_\theta \exp(F_\theta(\orho))\prod_{ x \in \midedge_n} K_{(0,0)}(x,\bar{\rho}(x))\\
    &= (-1)^{\#S(\orho)} \prod_{ x \in \midedge_n} K_{(0,0)}(x,\bar{\rho}(x)).
\end{align*}
The result follows by noting that 
\begin{equation*}
    K_{00}(x,\bar{\rho}(x)) = \alpha^{\#A(\orho)} \beta^{\#B(\orho)} \gamma^{\#C(\orho)}. \qedhere
\end{equation*}  
\end{proof} 

We are now equipped to prove Theorem \ref{thm:partfn}.

\begin{proof}[Proof of Theorem~\ref{thm:partfn}]
    For a set $M$, let $\Sym(M)$ be the set of permutations on $M$. Recall that for a general operator, $K \colon M \times M \to \mathbb{C}$, the definition of the determinant is that
    \begin{equation*}
        \det(K) = \sum_{\rho \in \Sym(M)} \sgn(\rho) \prod_{x \in M} K(x,\rho(x)).
    \end{equation*}
    Hence, we have that
    \begin{equation*}
        \sum_{\theta \in \Theta} C_\theta \det (K_\theta) = \sum_{\theta \in \Theta} C_\theta \sum_{\orho \in \Sym(\midedge_n)} \sgn(\orho) \prod_{x \in \midedge_n} K_\theta(x,\orho(x)).
    \end{equation*}
    In particular, $\prod_{x \in \midedge_n}K_\theta(x,\orho(x))$ is supported on $\orho \in \gen_n \subset \Sym(\midedge_n)$, so the sum collapses to
    \begin{equation*}
        \sum_{\theta \in \Theta} C_\theta \det (K_\theta) = \sum_{\orho \in \gen_n} \sgn(\orho) \sum_{\theta \in \Theta} C_\theta \prod_{x \in \midedge_n} K_\theta(x,\orho(x)).
    \end{equation*}
    But now by Lemma~\ref{lem:weightform},
    \begin{align*}
        \sum_{\theta \in \Theta} C_\theta \det (K_\theta) &=\sum_{\orho \in \gen_n} (-1)^{\#S(\orho)} \prod_{x \in \midedge_n} K_{(0,0)}(x,\orho(x)),\\
        &= \sum_{\orho \in \gen_n} \ow(\orho),
    \end{align*}
which is the definition of $Z_n^\gen$.
\end{proof}

\subsection{\texorpdfstring{Correlations under $P_n$}{Correlations under Pn}}

Recall that $P_n$ is a signed probability measure on generalised snake configurations. While $P_n$ is not a genuine probability measure, we can nonetheless speak of its correlations. Let $x^1,\dots,x^k$ and $y^1,\dots,y^k$ be mid-edges such that $y^i - x^i \in \{(0,0,e^1,e^2,e^3\}$. Then the correlations of $P_n$ are the quantities
\begin{align*}
      P_n\left[\bigcap_{i=1}^k \{\orho(x^i) = y^i\}\right]
      := \sum_{ \orho } P_n(\orho) \prod_{i=1}^k  \mathrm{1}\{\orho(x^i) = y^i\}. 
\end{align*}
Of course, if $P_n$ were a genuine probability measure, this quantity would coincide with the expectation of the product of the random variable $\prod_{i=1}^k \mathrm{1}\{\orho(x^i) = y^i\}$.

Our next result harnesses the determinantal structure of the partition function to express these correlations explicitly. 
This result is a variation on well-known results in the literature: it is very similar to Lemma 3.8 of \cite{JS}, but the ideas dates back much earlier (e.g. \cite{kenyonaihp}). 

\begin{proposition} \label{prop:corr}
        Let $x^1,\dots,x^k \in \midedge_n$ and let $y^1,\ldots,y^k$ be suitable neighbouring mid-edges, so that $y^i - x^i \in \{(0,0), e^1, e^2, e^3\}$ for each $i$.  Then
        \begin{equation} \label{eq:corrn}
              P_n\left[\bigcap_{i=1}^k \{\orho(x^i) = y^i\} \right] =  \prod_{i = 1}^{k} K_{(0,0)}(x^i,y^i) \sum_{\theta \in \{0,1\}^2 } \frac{C_\theta \det(K_\theta)}{Z^{\gen}_n} q_\theta \det_{i,j = 1}^{k} K_\theta^{-1} (y^i,x^j),
        \end{equation}
        where $q_\theta = \exp\left(\frac{\pi \iota}{n}\big(\theta,\sum_{i=1}^k (y^i - x^i)\big) \right)$, with $\big(\theta,\sum_{i=1}^k (y^i - x^i)\big)$ being the standard inner product on $\mathbb{R}^2$.
\end{proposition}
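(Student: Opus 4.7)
The plan is to start from the definition of $P_n$, expand the weight $\bar w(\bar\rho)$ using Lemma~\ref{lem:weightform}, interchange sums, and then recognise the resulting constrained signed-permutation sum as a classical cofactor/Jacobi identity involving $\det(K_\theta)$ and a $k\times k$ minor of $K_\theta^{-1}$.

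First I would write
\begin{align*}
P_n\!\left[\bigcap_{i=1}^k\{\bar\rho(x^i)=y^i\}\right]
=\frac{1}{Z_n^{\gen}}\sum_{\bar\rho\in\gen_n:\,\bar\rho(x^i)=y^i}\bar w(\bar\rho),
\end{align*}
substitute Lemma~\ref{lem:weightform}, and swap the $\theta$-sum outside to obtain
\begin{align*}
P_n[\,\cdot\,]=\frac{1}{Z_n^{\gen}}\sum_{\theta\in\{0,1\}^2} C_\theta\!\!\sum_{\pi\in\Sym(\midedge_n):\,\pi(x^i)=y^i}\!\!\sgn(\pi)\prod_{x\in\midedge_n}K_\theta(x,\pi(x)).
\end{align*}
Here the outer sum is enlarged from $\gen_n$ to all of $\Sym(\midedge_n)$ at no cost, since by the definition~\eqref{eq:Kdef} of $K_\theta$ the product $\prod_x K_\theta(x,\pi(x))$ vanishes unless $\pi(x)-x\in\{0,e^1,e^2,e^3\}$ (with the correct colour rules), i.e.\ unless $\pi\in\gen_n$.

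Next, for each $\theta$ I would apply the classical cofactor identity
\begin{align*}
\sum_{\pi:\,\pi(x^i)=y^i\,\forall i}\sgn(\pi)\prod_{x}K_\theta(x,\pi(x))
=\det(K_\theta)\prod_{i=1}^k K_\theta(x^i,y^i)\,\det_{i,j=1}^k\!\left[K_\theta^{-1}(y^i,x^j)\right].
\end{align*}
This is obtained by factoring out $\prod_i K_\theta(x^i,y^i)$, recognising the remaining sum as the minor of $K_\theta$ with rows $x^1,\dots,x^k$ and columns $y^1,\dots,y^k$ deleted, and invoking Jacobi's complementary-minor identity to rewrite that minor as $\det(K_\theta)$ times a $k\times k$ submatrix of $K_\theta^{-1}$. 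For $k=1$ this is a one-line cofactor expansion along row $x^1$; for general $k$ it is the standard extension.

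Finally I would use the fact that by~\eqref{eq:Kdef},
\begin{align*}
K_\theta(x,y)=K_{(0,0)}(x,y)\,\exp\!\left(\tfrac{\pi\iota}{n}(\theta,\,y-x)\right)
\qquad\text{for all }y-x\in\{0,e^1,e^2,e^3\},
\end{align*}
so that $\prod_{i=1}^k K_\theta(x^i,y^i)=q_\theta\prod_{i=1}^k K_{(0,0)}(x^i,y^i)$. Pulling the $\theta$-independent product $\prod_i K_{(0,0)}(x^i,y^i)$ outside the $\theta$-sum yields exactly~\eqref{eq:corrn}. The only delicate step is the cofactor identity in the middle, where the sign bookkeeping and the $(i,j)$ versus $(j,i)$ convention for $\det[K_\theta^{-1}(y^i,x^j)]$ need to be tracked carefully; everything else is a direct algebraic manipulation building on Lemma~\ref{lem:weightform} and Theorem~\ref{thm:partfn}.
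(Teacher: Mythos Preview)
Your proposal is correct and follows essentially the same route as the paper's proof: expand $\bar w(\bar\rho)$ via Lemma~\ref{lem:weightform}, extend the sum from $\gen_n$ to all of $\Sym(\midedge_n)$ using the support of $\prod_x K_\theta(x,\pi(x))$, apply Jacobi's complementary-minor identity, and finally factor $\prod_i K_\theta(x^i,y^i)=q_\theta\prod_i K_{(0,0)}(x^i,y^i)$. The only cosmetic difference is that the paper states Jacobi's identity up front and cites \cite{HJ}, whereas you sketch its derivation; otherwise the arguments coincide step for step.
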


\begin{proof}

We first note Jacobi's identity (see e.g. Horn and Johnson \cite[Section 0.8.4]{HJ}), which states that, given an invertible matrix $(K_{x,y})_{1 \leq x,y \leq n}$ and any distinct $x_1,\ldots,x_k$ and distinct $y_1,\ldots,y_k$ in $\{1,\ldots,n\}$, we have
\begin{equation*}
\sum_{ \sigma \in S_n } \prod_{i=1}^k \mathrm{1} \{ \sigma (x_i) = y_i \} \mathrm{sgn}(\sigma) \prod_{x=1}^n K_{x,\sigma(x)} = \prod_{i=1}^k K_{x_i,y_i} \det_{x,y=1}^n(K_{x,y}) \det_{i,j = 1}^k ( K^{-1}_{y_i,x_j} ).
\end{equation*}
Now, from the definition of $P_n$,
\begin{equation*}
    P_n\left(\bigcap_{i=1}^k \{\orho(x^i) = y^i\}\right) = \sum_{\orho \in \gen_n} \frac{ \ow(\orho)\prod_{i=1}^k  \mathrm{1}\{\orho(x^i) = y^i\}}{Z^\gen_n}.
\end{equation*}
From Lemma~\ref{lem:weightform}, and recalling that $\prod_{x \in \midedge_n}K_\theta(x,\orho(x))$ is only supported on $\orho \in \gen_n$,
\begin{equation*}
    P_n\left(\bigcap_{i=1}^k \{\orho(x^i) = y^i\}\right) = \sum_{\theta \in \Theta}\frac{1}{Z_n^\gen}C_\theta\sum_{\orho \in \Sym(\midedge_n)} \prod_{i=1}^k \mathrm{1}\{\orho(x^i) = y^i\} \sgn(\orho)\prod_{x \in \midedge_n}K_\theta(x,\orho(x)).
\end{equation*}
But now we can apply Jacobi's identity to find that
\begin{equation*}
      P_n\left(\bigcap_{i=1}^k \{\orho(x^i) = y^i\}\right)  = \sum_{\theta \in \{0,1\}^2 } \frac{C_\theta \det(K_\theta)}{Z_n} \prod_{i = 1}^{k} K_\theta(x^i,y^i)  \det_{i,j = 1}^{k} K_\theta^{-1} (y^i,x^j).
\end{equation*}
The result follows from noting that with $q_\theta$ as in the statement, we have 
\begin{equation*}
    \prod_{i = 1}^{k} K_\theta(x^i,y^i) = q_\theta \prod_{i = 1}^{k} K_{00}(x^i,y^i). \qedhere
\end{equation*}

\end{proof}
\subsection{Explicit form for the inverse operator} 

We saw at the end of the previous subsection that the correlations under $P_n$ can be described in terms of the inverse operator of $K_\theta^{-1}$. There are two obvious questions raised by this result. The first is whether $K_\theta^{-1}$ admits an explicit form, and the other is how this formula relates to events concerning the six-vertex model. The latter question takes a bit of setup and will be the subject of the following section, but the former question we can answer immediately.

As an ansatz based on comparisons with other exactly solvable lattice models, we might assume that $K_\theta^{-1}:\midedge_n \times \midedge_n \to \mathbb{C}$ is given by 
\begin{equation*}
K_\theta^{-1}(x,y) := \frac{1}{n^2} \sum_{\substack{w^{n}=1\\z^n=1}} w_1^{y_1-x_1} w_2^{y_2-x_2} f_\theta(w_1,w_2,c(x),c(y)),
\end{equation*}
where $f_\theta(w_1,w_2,c(x),c(y))$ depends on $\theta$, the roots $w_1,w_2$, but only depends on $x$ and $y$ through their colours $c(x)$ and $c(y)$. Explicitly, $c:\midedge_n \to \{B,W\}$ is the function,
\begin{equation*}
    c(x) = \begin{cases}
        B  &\text{ if } \quad x \in \midedge_n^B,\\
        W  &\text{ if } \quad x \in \midedge_n^W,
    \end{cases}
\end{equation*}
taking a mid-edge and returning its colour $b$ or $w$ (standing for black and white respectively). This indeed turns out to be a good guess.

Let us recall the definition of $K_\theta$ from Equation~\ref{eq:Kdef}. In particular, note that we can rewrite $K_\theta$ as 
\begin{equation}\label{eq:Kdef2}
   K_\theta(x,y)=
    \begin{cases}
   1, &  \text{if $x=y$},\\[6pt]
     \alpha_\theta, &\text{if $y = x + e^1$ and $c(x) = b$},\\[6pt]
     \beta_\theta &\text{if $y = x + e^2$ and $c(x) = w$,}\\[6pt]
     \gamma_\theta, &\text{if $y = x + e^3$}.
     \end{cases}
\end{equation}
where
\begin{align*}
 \alpha_\theta :=  \alpha \exp\left(\frac{\pi \iota \theta_1}{n}\right), \quad
 \beta_\theta :=  \beta\exp\left(\frac{\pi \iota \theta_2}{n}\right), \quad \text{and }
 \gamma_\theta :=  \gamma \exp\left(\frac{\pi \iota}{2n}(\theta_1 + \theta_2)\right).
 \end{align*}

Throughout this section, we will have square roots of complex numbers. Whilst we shall take the branch of the square root function $z \mapsto z^{1/2}$ for which $z^{1/2}$ always has argument taking values in $[0,\pi)$, it should never be relevant to the discussion, as we should find that the choice of branch cut should not matter.

For any $w_1,w_2 \in S^1 \coloneqq \{w \in \mathbb{C}: |w| = 1\}$, consider the $2\times2$ matrix $M_\theta(w_1,w_2)$ with rows and columns labelled by $B$ and $W$, given by 
\begin{equation*}
    M(w_1,w_2)=M_{\alpha,\beta,\gamma}(w_1,w_2) := 
\begin{pmatrix}
1+\alpha w_1 & \gamma w_1^{1/2} w_2^{1/2}\\[6pt]
\gamma w_1^{1/2} w_2^{1/2} & 1+\beta w_2
\end{pmatrix}.
\end{equation*}
The determinant of this matrix is given by 
\begin{equation*}
\Delta(w_1,w_2)
= (1 + \alpha w_1)(1 + \beta w_2) - \gamma^2 w_1 w_2.
\end{equation*}
Whenever $\Delta(w_1,w_2)\neq 0$, the inverse matrix is given by 
\begin{equation}\label{eq:defM}
M(w_1,w_2)^{-1} = \frac{1}{\Delta(w_1,w_2)}
\begin{pmatrix}
1+\beta w_2 & -\gamma w_1^{1/2} w_2^{1/2} \\[6pt]
-\gamma w_1^{1/2} w_2^{1/2} & 1+\alpha w_1
\end{pmatrix}.
\end{equation}

Now we can show that our ansatz is accurate, and explicitly write down the form of $f_\theta$ that is required.

\begin{proposition}\label{prop:Kinv}
    Let $x = (x_1,x_2)$ and $y = (y_1,y_2)$ be elements of $\midedge_n$. Then $K_\theta^{-1}: \midedge_n \times \midedge_n \to \mathbb{C}$ takes the explicit form
    \begin{equation} \label{eq:Kinv}
        K_\theta^{-1}(x,y) = \frac{1}{n^2} \sum_{\substack{w_1^n=1 \\ w_2^n=1}}w_1^{\,x_1-y_1} \, w_2^{\,x_2-y_2} \, \big[ M_{\alpha_\theta,\beta_\theta,\gamma_\theta}(w_1,w_2)^{-1}\big]_{c(x),\,c(y)}.
    \end{equation}
\end{proposition}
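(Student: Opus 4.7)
I would verify the formula \eqref{eq:Kinv} by direct substitution into the defining identity $\sum_{z \in \midedge_n} K_\theta(x, z) K_\theta^{-1}(z, y) = \delta_{x,y}$. The underlying structural reason is that, modulo the black/white decomposition, $K_\theta$ is translation invariant on $(\mathbb{Z}/n)^2$, so the discrete Fourier transform diagonalises it into a direct sum over Fourier modes $(w_1, w_2) \in \{w : w^n = 1\}^2$ of the $2 \times 2$ blocks $M_{\alpha_\theta, \beta_\theta, \gamma_\theta}(w_1, w_2)$ from \eqref{eq:defM}. Inverting block-by-block produces \eqref{eq:Kinv}; the direct verification is the most efficient way to record this.

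Concretely, the first step is the sparsity observation from \eqref{eq:Kdef2}: for $x \in \midedge_n^B$ the row $K_\theta(x, \cdot)$ is supported on $\{x, x+e^1, x+e^3\}$ with values $1, \alpha_\theta, \gamma_\theta$, and for $x \in \midedge_n^W$ the support is $\{x, x+e^2, x+e^3\}$ with values $1, \beta_\theta, \gamma_\theta$. Substituting the candidate formula and pulling the common monomial $\tfrac{1}{n^2} w_1^{x_1 - y_1} w_2^{x_2 - y_2}$ outside, the shifts by $e^1$ and $e^3$ produce additional factors $w_1$ and $w_1^{1/2} w_2^{1/2}$, which are precisely the $w$-dependent parts of the entries $M_{BB} = 1 + \alpha_\theta w_1$ and $M_{BW} = \gamma_\theta w_1^{1/2} w_2^{1/2}$. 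Thus the three terms collapse to the $(c(x), c(y))$ entry of the Fourier-mode product $M \cdot M^{-1}$, giving
\[
\sum_{z \in \midedge_n} K_\theta(x, z) K_\theta^{-1}(z, y) = \frac{1}{n^2} \sum_{w_1^n = w_2^n = 1} w_1^{x_1 - y_1} w_2^{x_2 - y_2} \, \delta_{c(x), c(y)}.
\]
When $c(x) \neq c(y)$ this vanishes termwise, matching $\delta_{x,y} = 0$. When $c(x) = c(y)$ we have $x - y \in \mathbb{Z}^2$, and discrete orthogonality of characters yields $\delta_{x, y}$. The case $x \in \midedge_n^W$ is handled by the same calculation with the second row of $M$ replacing the first.

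\textbf{Main obstacle.} The one delicate point is the half-integer powers of $w_1, w_2$ that appear when $c(x) \neq c(y)$: the monomial $w_1^{x_1 - y_1} w_2^{x_2 - y_2}$ and the off-diagonal entry $[M^{-1}]_{BW} \propto w_1^{1/2} w_2^{1/2}$ individually require a branch choice, but, as already noted in the paper after \eqref{eq:Ldef0}, their product always carries only integer total powers, so the formula is branch-independent and the matrix algebra above goes through verbatim. The computation also tacitly assumes $\Delta_{\alpha_\theta, \beta_\theta, \gamma_\theta}(w_1, w_2) \neq 0$ on all $n$-th roots of unity --- equivalently, $K_\theta$ is invertible --- a genericity condition that can be checked from the positivity hypothesis $\gamma^2 - \alpha\beta > 0$ of Theorem \ref{thm:pushforward}.
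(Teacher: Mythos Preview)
Your proposal is correct and follows essentially the same route as the paper: both verify the candidate formula by checking $K_\theta \circ J = \mathrm{Id}$ directly, using the sparsity of $K_\theta(x,\cdot)$ to reduce the convolution to the matrix identity $M\cdot M^{-1}=I$ in each Fourier mode, and then invoking discrete orthogonality of characters. The only cosmetic difference is that the paper first reduces to $\theta=(0,0)$, while you carry $\alpha_\theta,\beta_\theta,\gamma_\theta$ throughout; your added remark on the genericity condition $\Delta\neq 0$ is a welcome clarification the paper leaves implicit.
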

Let us emphasise that the expression in \eqref{eq:Kinv} has no half powers of $x$ and $y$. To see this, note that if $x$ and $y$ have the same colour, i.e.\ if $c(x) = c(y)$, then $x_1-y_1$ and $x_2 -y_2$ are both integers, and $M(w_1,w_2)^{-1}_{c(x),c(y)}$ contains no half powers, and if $x$ and $y$ have different colours, then $x_1-y_1$ and $x_2-y_2$ will both be non-integer half-integers, but the corresponding half powers of $w_1$ and $w_2$ will be coupled with the half powers in $M_\theta (w_1,w_2)^{-1}_{c(x),c(y)} = -\tfrac{1}{\Delta(w_1,w_2)}\gamma_\theta w_1^{1/2}w_2^{1/2}$.
\begin{proof}
We may assume without loss of generality that $\theta = (0,0)$, since both the definition \eqref{eq:Kdef2} for $K_\theta$ and \eqref{eq:Kinv} for $K_\theta^{-1}$ factor through $\alpha_\theta,\beta_\theta,\gamma_\theta$. Accordingly, we will suppress the subscript $\theta = (0,0)$ from now on in this proof.

With this in mind, we need only check that
\begin{equation}\label{eq:Jcheck}
    (K \circ J)(x,y) = \delta_{x,y},
\end{equation}
with $J$ defined as the RHS of Equation~\eqref{eq:Kinv} (when $\theta = (0,0)$). Along these lines, using the definition \eqref{eq:Kdef2}, we have that
\begin{align} \label{eq:convy}
(K \circ J)(x,y) &= \sum_{z \in \midedge_n} K(x,z) J(z,y) \nonumber \\
&= J(x,y) + \mathrm{1}_{c(x)=B} \alpha J( x+e^1,y) + \mathrm{1}_{c(x)=W}  \beta J( x+e^2,y) + \gamma J \left( x + e^3 , y \right) \nonumber \\
&= \frac{1}{n^2} \sum_{\substack{w_1^n=1\\w_2^n=1}} w_1^{\,x_1-y_1} \, w_2^{\,x_2-y_2} h(w_1,w_2)_{c(x),x(y)},
\end{align}
where
\begin{multline*}
    h(w_1,w_2)_{c(x),x(y)} = ( 1 + \mathrm{1}_{c(x)=B} \alpha w_1 + \mathrm{1}_{c(x)=W} \beta w_2 ) M^{-1}(w_1,w_2)_{c(x),c(y)} \\ + \gamma w_1^{1/2}w_2^{1/2} M^{-1}(w_1,w_2)_{\tilde{c}(x),c(y)},
\end{multline*}
with $\tilde{c}(x) \in \{B,W\}$ being the choice of colour such that $\tilde{c}(x) \neq c(x)$. The last term above follows from the fact that adding $e^3$ to a mid-edge changes its colour. But now we can rewrite $h$ as
\begin{align*}
    h(w_1,w_2)_{c(x),c(y)} &= \sum_{c \in \{B,W\}}\begin{bmatrix}
        1+\alpha w_1& \gamma w_1^{1/2}w_2^{1/2}\\
        \gamma w_1^{1/2}w_2^{1/2}&1 + \beta w_2
    \end{bmatrix}_{c(x),c}M^{-1}(w_1,w_2)_{c,c(y)}\\
    &= \sum_{c \in \{B,W\}}M(w_1,w_2)_{c(x),c}M^{-1}(w_1,w_2)_{c,c(y)}\\
    &= \mathrm{1}_{c(x) = c(y)}
\end{align*}

Plugging this into \eqref{eq:convy}, we find that
\begin{equation*}
    (K \circ J)(x,y) = \frac{1}{n^2}\sum_{\substack{w_1^n = 1\\w_2^n = 1}} w_1^{x_1 - y_1}w_2^{x_2 - y_2} \mathrm{1}_{c(x) = c(y)}
\end{equation*}
When $c(x) \neq c(y)$, this expression is $0$. And when $c(x) = c(y)$, $y_1 - x_1$ and $y_2 - x_2$ are both integers, and by discrete Fourier orthogonality, this expression is equal to $1$ if and only if $x_1 - y_1 = x_2 - y_2 = 0$. This confirms Equation~\eqref{eq:Jcheck} to be true.
\end{proof}

\section{The infinite volume limit}\label{sec:scalinglim}
Let us take stock of what we have seen so far. Recall that a generalised snake configuration is a permutation $\orho \colon \midedge_n \to \midedge_n$ on the mid-edges of $\tor_n$ with the property that each edge is either sent to itself, or to an edge immediately east, immediately north, or immediately north-east of a vertex. Given parameters $\alpha,\beta,\gamma$, we introduced a signed probability measure $P_n$ on generalised snake configurations on $\midedge_n$. We noted that the pushforward $\mathbb{P}_n = \bigsh_\#P_n$ of this probability measure, taking a generalised snake configuration and outputting a six-vertex configuration, is itself the Gibbs probability measure on six-vertex configurations with free-fermion weights given by the regime in \eqref{eq:pushpara}. 
We subsequently saw in Proposition \ref{prop:corr} that the correlations of the measure $P_n$ can be described explicitly in terms of determinants of the inverse operator, which is subsequently computed explicitly in Proposition \ref{prop:Kinv}. In this section, we take the infinite volume limit and pull back our results to the six-vertex model on $\mathbb{Z}^2$ to prove Theorem~\ref{thm:main}.

\subsection{The infinite volume limit of generalised snakes}

Precisely, we aim to describe the local correlation structure of the generalised snake model (and consequently, the six-vertex model) on 
\begin{equation*}
    \midedge \coloneqq \left(\mathbb{Z}\times(\mathbb{Z}+\tfrac{1}{2})\right) \sqcup \left((\mathbb{Z}+\tfrac{1}{2})\times\mathbb{Z}\right),
\end{equation*} 
the natural set limit $\midedge_n \uparrow \midedge$. Note that $\mathbb{M}$ is the set of mid-edges for $\mathbb{Z}\times  \mathbb{Z}$, on which we want to consider the six-vertex model. Denote by $\gen$ the set of generalised snakes on $\midedge$, equipped with the natural cylindrical $\sigma$-algebra, the $\sigma$-algebra generated by all the finite-coordinate projections.

Briefly, to be more precise about the mode of convergence, we may reconsider generalised snake configurations on $\midedge_n$ to be generalised snake configurations on $\midedge$ which are periodic, with period $n$ in the vertical and horizontal directions. In this sense, $P_n$ can be considered a measure on the set of generalised snake configurations on the lattice $\mathbb{Z}^2$, supported on these $n$-periodic generalised snakes. We want to show that a measure $P \coloneqq P^{\alpha,\beta,\gamma}$ on $\gen$ exists, such that $P_n$ converges weakly to $P$ as $n \to \infty$. Assuming that the measure $P$ is consistent when restricted to a measure on finite subsets, $P$ is uniquely determined by its local correlation structure by a version of Kolmogorov's extension theorem, since these correlations specify the signed probability over all cylinder events. However, the consistency is trivial to show, as the measures $P_n$ are consistent by their construction, and this property passes to $P$ through the limit.

Let us now write down the formula for the signed probabilities of the finite coordinate projections of generalised snakes.

\begin{thm}\label{thm:localcorrGS}
    The measures $P_n$ converge weakly to a signed probability measure $P$ on the set of generalised snake configurations on the lattice, whose correlations are given explicitly by 
    \begin{equation*}
        P\left(\prod_{i=1}^k \{ \rho x^i = y^i \}\right) = \left(\prod_{i=1}^k K_{(0,0)}(x^i, y^i )\right)\det_{i,j = 1}^{k}L(x^{i},y^{j}),
    \end{equation*}
    where $x^1,\dots,x^k, y^1,\dots,y^k \in \midedge$ such that for each $i$, $y^i - x^i \in \{(0,0), e^1, e^2, e^3\}$, and
    \begin{equation} \label{eq:Ldef} 
        L(x,y) = \oint\!\oint_{|w_1|=|w_2|=1} \frac{dw_1}{2\pi i w_1}\,\frac{dw_2}{2\pi i w_2}\,w_1^{y_1 -x_1 }w_2^{y_2 - x_2}\big[M(w_1,w_2)^{-1}\big]_{c(x),\,c(y)}.
    \end{equation}
\end{thm}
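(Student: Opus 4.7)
My plan is to pass to the thermodynamic limit $n\to\infty$ directly in the finite-torus correlation formula \eqref{eq:corrn} of Proposition~\ref{prop:corr}, verify termwise convergence of every factor, and then invoke a Kolmogorov-extension argument to stitch the limiting finite-dimensional correlations into a single signed measure $P$ on $\gen$. Concretely, fix $x^1,\ldots,x^k, y^1,\ldots,y^k \in \midedge$ satisfying the adjacency constraint $y^i - x^i \in \{(0,0), e^1, e^2, e^3\}$. For $n$ large enough these sites embed in $\midedge_n$, so \eqref{eq:corrn} applies. Three convergences reduce its right-hand side to the claimed formula: (i) the phase $q_\theta = \exp\!\big(\tfrac{\pi\iota}{n}(\theta, \sum_i(y^i-x^i))\big)$ tends to $1$, since its exponent is $O(1/n)$; (ii) Proposition~\ref{prop:Kinv} writes $K_\theta^{-1}(y^i,x^j)$ as a Riemann sum over $n$-th roots of unity whose twisted parameters $\alpha_\theta,\beta_\theta,\gamma_\theta$ tend to $\alpha,\beta,\gamma$, and this sum converges to the contour integral
\begin{equation*}
\oint\!\!\oint_{|w_1|=|w_2|=1}\frac{dw_1\, dw_2}{(2\pi\iota)^2\, w_1 w_2}\, w_1^{y^i_1 - x^j_1}\, w_2^{y^i_2 - x^j_2}\,\big[M(w_1,w_2)^{-1}\big]_{c(y^i),c(x^j)},
\end{equation*}
which by symmetry of $M^{-1}$ equals $L(x^j,y^i)$; transposing the resulting matrix yields $\det_{i,j} K_\theta^{-1}(y^i,x^j) \to \det_{i,j} L(x^i,y^j)$, uniformly in $\theta$; and (iii) by Theorem~\ref{thm:partfn} the mixing coefficients $C_\theta \det(K_\theta)/Z_n^{\gen}$ sum to $1$, so (provided they are individually bounded, which follows from the Kasteleyn structure in the disordered regime) the $\theta$-average collapses to the common limiting determinant, producing the stated product $\prod_{i=1}^k K_{(0,0)}(x^i,y^i)\, \det_{i,j=1}^k L(x^i,y^j)$.

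The main technical obstacle lies in step (ii). The integrand is a polynomial divided by $\Delta(w_1,w_2)$, and under the free-fermion parametrisation the zero locus $\{\Delta = 0\} \cap (S^1)^2$ may be a nontrivial real-analytic curve, so the integrand is only an integrable function with singularities on the contour of integration. The twist parameter $\theta$ is designed precisely to shift the sample grid off exact roots of unity where $\Delta$ might vanish, but one still requires a quantitative lower bound on $|\Delta|$ at the shifted grid to rule out anomalous contributions from sample points close to $\{\Delta = 0\}$. I expect this to be the principal analytical step; standard arguments exploiting that $\{\Delta=0\} \cap (S^1)^2$ has real dimension at most one, together with the equidistribution of shifted roots of unity, should suffice. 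As a robust alternative, one may slightly deform the contour $(S^1)^2$ into a region where $|\Delta|$ is bounded away from zero, verify the (now uniform) Riemann-sum convergence there, and invoke Cauchy's theorem to confirm the deformation leaves the integral unchanged.

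Once termwise convergence of all cylinder-event correlations is established, consistency of the limit family under marginalisation is inherited directly from the analogous consistency of the $P_n$, since restriction to fewer coordinates is a linear operation that commutes with the $n\to\infty$ limit. A Kolmogorov-type extension theorem for signed measures (applicable because the cylindrical $\sigma$-algebra on $\gen$ is generated by a countable family of consistent finite-dimensional marginals on a compact product space, as in \cite{JS}) then produces a unique signed measure $P$ on $\gen$ with the prescribed correlations, from which the weak convergence $P_n \to P$ and the stated determinantal formula follow at once.
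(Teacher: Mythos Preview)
Your proposal is correct and follows essentially the same route as the paper: pass to the limit in \eqref{eq:corrn}, use Proposition~\ref{prop:Kinv} to recognise $K_\theta^{-1}$ as a Riemann sum converging to the contour integral $L$, exploit the symmetry of $M^{-1}$, collapse the $\theta$-average via $\sum_\theta C_\theta\det(K_\theta)/Z_n^{\gen}=1$, and invoke Kolmogorov extension. The one place you overcomplicate matters is your ``main technical obstacle'': the paper observes that for parameters in our range the zero locus $\{\Delta=0\}\cap(S^1)^2$ is in fact a \emph{finite} set of points (since $\Delta(w_1,\cdot)$ is affine in $w_2$, for each $w_1$ there is at most one $w_2$, and the condition that this $w_2$ lies on $S^1$ is a single real-analytic equation in the argument of $w_1$), so no contour deformation or quantitative lower bound on $|\Delta|$ is needed---the Riemann-sum convergence is elementary once one restricts to the cofinite set of $w_1$ for which $\Delta(w_1,\cdot)$ is nonvanishing on $S^1$.
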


\begin{proof}
    We show that $P_n$ converges on all finite-coordinate projections. By the discussion before the statement of the theorem, this defines the signed probability measure $P$.
    
    Examine the form of $P_n$ in Proposition \ref{prop:corr} in the limit as $n \to \infty$. By its independence of $n$, the product $\prod_{i=1}^k K_{(0,0)}(x^i,y^i)$ remains constant. For any choice of $\alpha, \beta$ and $\gamma$ in our parameter space, there will only be a finite set of pairs $(w_1,w_2)$ such that $\Delta(w_1,w_2) = 0$. Rephrasing, for co-finitely many $w_1$, $\Delta(w_1,w_2)$ will be non-zero for all $w_2 \in S^1$. So from Proposition~\ref{prop:Kinv}, by considering the double integral on the set of $w_1$ such that $\Delta(w_1,w_2) \neq 0$ for all $w_2 \in S^1$, we have the convergence of the Riemann sum,
    \begin{multline*}
    q_\theta \det_{i,j = 1}^{k} K_\theta^{-1} (y^i,x^j) \\ 
    \to \det_{i,j = 1}^k\left(\oint\!\oint_{|w_1| = |w_2| = 1}\frac{1}{2\pi \iota w_1} \frac{1}{2\pi \iota w_2} w_1^{y^i_1 - x_1^j} w_2^{y_2^i- x_2^j} \left[M(w_1,w_2)^{-1}\right]_{c(y^i),c(x^j)}\right).
    \end{multline*}
    Finally, recall from Theorem~\ref{thm:partfn} that $\sum_{\theta \in \Theta}\frac{C_\theta\det(K_\theta)}{Z^\gen_n} = 1$. Therefore, the result follows by taking $n \to \infty$ in Equation~\eqref{eq:corrn} and noting that $M(w_1,w_2)^{-1}$ is a symmetric matrix.
\end{proof}

\subsection{Correlations of the six-vertex model}

We would like to consider the six-vertex model on $\tor \coloneqq \mathbb{Z}^2$ now, which we can make sense of via the shape map. Denote by $\sixv$ the set of six-vertex configurations on $\tor$ that satisfy the ice rule, and $\pure$ the set of pure snake configurations on $\midedge$. It is clear to see that, defined in the same way as in the finite case, there exists a bijection $\Phi \colon \sixv \longleftrightarrow \pure$ and a projection $\sh\colon \gen \twoheadrightarrow \pure$, which can be composed into a projection 
\begin{equation*}
    \bigsh = \Phi^{-1} \circ \sh \colon \gen \twoheadrightarrow \sixv.
\end{equation*}
For a choice of parameters $\alpha, \beta, \gamma$ such that $\gamma^2 - \alpha \beta > 0$, we will define the (possibly signed) probability measure $\mathbb{P} \coloneqq \mathbb{P}^{\alpha,\beta,\gamma}$ as the pushforward, 
\begin{equation}\label{eq:pushforwardP}
    \mathbb{P} \coloneqq \bigsh_{\#} P.
\end{equation}
In fact, the next theorem will show that $\mathbb{P}$ is a genuine probability measure.

To set up the following theorem, let us recall the local vertex types of the six-vertex model. For the reader's convenience, we have recalled them in Figure \ref{fig:iceconfig2}, listed in the order they appeared earlier in Figure~\ref{fig:iceconfig}. 

{
\renewcommand{\arraystretch}{2}
\setlength{\tabcolsep}{10pt}
\begin{figure}[!htb]
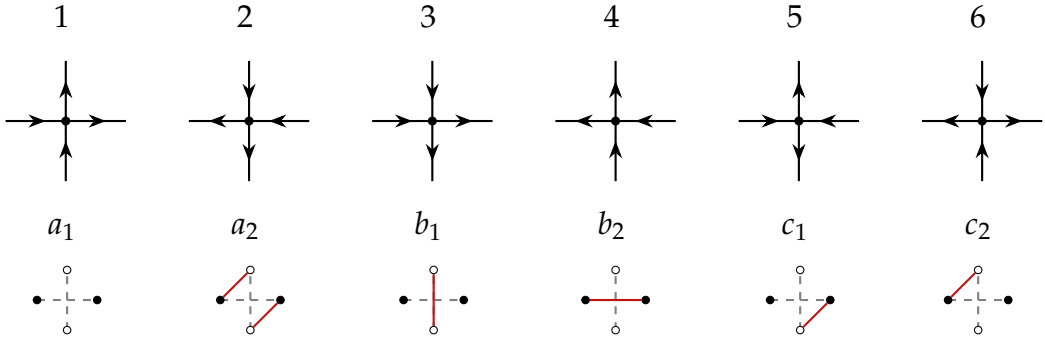

\begin{tabular}{c c c c c c}
1&2&3&4&5&6\\[4pt]
\ai & \aii & \bi & \bii & \ci & \cii \\
$a_1$ & $a_2$ & $b_1$ & $b_2$ & $c_1$ & $c_2$ \\
\hspace{4pt}\ainew[1] & \hspace{4pt}\aiinew[1] & \hspace{4pt}\binew[1] & \biinew[1]& \hspace{4pt}\cinew[1] & \ciinew[1]
\end{tabular}
\caption{All possible arrow patterns.}
\label{fig:iceconfig2}
\end{figure}

}

Let $a_t \coloneqq a_t^{\alpha,\beta,\gamma}$ be the weight associated with a vertex of type $t$ under the parametrisation \eqref{eq:pushpara}. Let $\sigma$ be a random six-vertex configuration with law $\mathbb{P}$. For $v = (v_1,v_2) \in \mathbb{T}_n$, let $\tau_v(\sigma)$ be the local vertex type of $\sigma$ at $v$ and for $t = 1,\dots,6$, let $V_t(v) = \{\tau_v(\sigma) = t\}$ be the event that we have a vertex of type $t$ at $v$ in $\sigma$.  To relate these to generalised snake events, given a vertex $v \in \tor$, let us introduce the shorthand 
\begin{align*}
W_v = v-\tfrac{1}{2}e^1, \quad S_v = v - \tfrac{1}{2}e^2,\quad  E_v = v+\tfrac{1}{2}e^1, \quad  N_v = v +\tfrac{1}{2}e^2
\end{align*}
for the four mid-edges incident to $v$. Note that $W_v$ and $E_v$ are black mid-edges, and $S_v$ and $N_v$ are white mid-edges. With this notation, a generalised snake configuration is then simply a permutation $\orho:\midedge \to \midedge$ for which 
\begin{align*}
\orho (W_v) \in \{ W_v, N_v, E_v \} \quad \text{and} \quad \orho (S_v) \in \{ S_v, E_v, N_v \}
\end{align*}
for every $v \in \tor$. A pure snake configuration $\rho$ is a generalised snake configuration with no crossings, i.e., for which there exist no vertices $v \in \mathbb{T}$ at which $\rho (S_v) = N_v$ and $\rho (W_v) = E_v$.

Now, examining how events on the six-vertex model push forward to events on generalised snakes, for all local vertex types $t$ and $v \in \tor$, let 
\begin{equation}\label{eq:defU}
    U_t(v) = \bigsh^{-1}(V_t(v)).
\end{equation}
Then we have the relations
\begin{align*}
U_1(v)  &= \{ \orho(W_v) = W_v \} \cap \{ \orho (S_v) = S_v \},\\
U_2(v)  &=(\{ \orho (W_v) = N_v \} \cap \{ \orho (S_v) = E_v \} ) \cup ( \{ \orho (W_v) = E_v \} \cap \{ \orho (S_v) = N_v \} ),\\
U_3(v)  &= \{ \orho (W_v) = W_v \} \cap \{ \orho (S_v) = N_v \}, \\
U_4(v)  &= \{ \orho (W_v) = E_v \} \cap \{ \orho (S_v) = S_v \}, \\
U_5(v)  &= \{ \orho (W_v) = W_v \} \cap \{ \orho (S_v) = E_v \}, \\
U_6(v)  &= \{ \orho (W_v) = N_v \} \cap \{ \orho (S_v) = S_v \}.
\end{align*}
Note that there is almost a one-to-one relationship between vertex events for the six-vertex model and the intersections of two distinct events where a generalised snake configuration sends a particular mid-edge to a particular mid-edge. The one exception is in the second case, where there are two possibilities for how a vertex of type $2$ might arise: either with or without a crossing at $v$. 

Finally, our main result is a consequence of a nice cancellation that occurs, meaning correlations remain reasonably straightforward under the pullback.

\begin{thm} \label{thm:premain}
Let $v^1,\ldots,v^k$ be vertices in $\mathbb{Z}^2$. Then we have 
\begin{align*}
\mathbb{P} \left( \bigcap_{i=1}^k V_{t_i}(v^i) \right) = \left(\prod_{ j =1}^k a_{t_j}\right) \det_{i,j=1}^{2k}L(x^i,y^j),
\end{align*}
where $L$ is as in \eqref{eq:Ldef}, and $\{ x^i,y^j : 1 \leq i,j \leq 2k \}$ are mid-edges in $\midedge$ defined from the intersection $ \bigcap_{i=1}^k V_{t_i}(v^i)$ as follows. For each $1 \leq i \leq k$ we have
\begin{align*}
x^{2i-1} = W_{v^i} \quad \text{and} \quad x^{2i} = S_{v^i}.
\end{align*} 
As for $y^1,\ldots,y^{2k}$, for each $1 \leq i \leq k$ we have
\begin{align*}
t_i = 1 &\implies y^{2i-1} = W_{v^i} \quad \text{and} \quad y^{2i} = S_{v^i}\\
t_i = 2 &\implies y^{2i-1} = N_{v^i} \quad \text{and} \quad y^{2i} = E_{v^i}\\
t_i = 3 &\implies y^{2i-1} = W_{v^i} \quad \text{and} \quad y^{2i} = N_{v^i}\\
t_i = 4 &\implies y^{2i-1} = E_{v^i} \quad \text{and} \quad y^{2i} = S_{v^i}\\
t_i = 5 &\implies y^{2i-1} = W_{v^i} \quad \text{and} \quad y^{2i} = E_{v^i}\\
t_i = 6 &\implies y^{2i-1} = N_{v^i} \quad \text{and} \quad y^{2i} = S_{v^i}.
\end{align*}
In other words, Theorem~\ref{thm:main} holds, once we note that $[M(w_1,w_2)^{-1}]_{c_1,c_2} = g(w_1,w_2,c_1,c_2)$ as defined in Equations \eqref{eq:gdef1} to \eqref{eq:gdef3}.

\end{thm}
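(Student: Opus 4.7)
The plan is to derive Theorem~\ref{thm:premain} from Theorem~\ref{thm:localcorrGS} via the pushforward $\mathbb{P} = \bigsh_\# P$ of \eqref{eq:pushforwardP}, which immediately yields
\[
\mathbb{P}\!\left( \bigcap_{i=1}^k V_{t_i}(v^i) \right) = P\!\left( \bigcap_{i=1}^k U_{t_i}(v^i) \right),
\]
so the task reduces to computing the signed correlation of the events $U_{t_i}(v^i)$ under $P$. The first step is to expand each $U_{t_i}(v^i)$ using the case list given just before \eqref{eq:defU}. For $t_i \in \{1,3,4,5,6\}$ the event $U_{t_i}(v^i)$ is already a single intersection of two snake constraints $\{\orho(x^j) = y^j\}$ of the form handled by Theorem~\ref{thm:localcorrGS}, with $(x^{2i-1},x^{2i}) = (W_{v^i}, S_{v^i})$ and $(y^{2i-1},y^{2i})$ exactly as prescribed in the statement. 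For $t_i = 2$, however, $U_2(v^i)$ is a \emph{disjoint union} of two such intersections: a "non-crossing branch" with $(y^{2i-1},y^{2i}) = (N_{v^i}, E_{v^i})$, and a "crossing branch" with $(y^{2i-1},y^{2i}) = (E_{v^i}, N_{v^i})$.

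Writing $I_2 := \{i : t_i = 2\}$ and distributing over the branch choice at each site of $I_2$, the event $\bigcap_i U_{t_i}(v^i)$ becomes a disjoint union of $2^{|I_2|}$ intersections of $2k$ individual snake constraints. Theorem~\ref{thm:localcorrGS} evaluates each one as a product of $K_{(0,0)}$-weights times a $2k \times 2k$ determinant of $L$ at the associated mid-edges. A direct inspection using \eqref{eq:Kdef2} together with \eqref{eq:pushpara} shows that for any $i \notin I_2$, the pair of $K_{(0,0)}$-factors attached to $v^i$ multiplies to $a_{t_i}$: specifically $1, \beta, \alpha, \gamma, \gamma$ for $t_i = 1,3,4,5,6$. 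For $i \in I_2$ the non-crossing branch contributes $\gamma \cdot \gamma = \gamma^2$, while the crossing branch contributes $\alpha \cdot \beta$.

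The crux of the argument is a column-swap cancellation at each type-2 vertex: switching from the non-crossing to the crossing branch at a single $i \in I_2$ exchanges $y^{2i-1} \leftrightarrow y^{2i}$, which swaps two columns of the $2k \times 2k$ matrix and negates its determinant. Summing over $S \subseteq I_2$ (the set of sites put in the crossing branch) then produces
\[
\sum_{S \subseteq I_2} \gamma^{\,2(|I_2|-|S|)}(\alpha\beta)^{|S|}(-1)^{|S|} \det_{i,j=1}^{2k} L(x^i,y^j) \;=\; (\gamma^2 - \alpha\beta)^{|I_2|} \det_{i,j=1}^{2k} L(x^i,y^j),
\]
where the matrix on the right uses the canonical (non-crossing) choice of $y^j$ described in the theorem. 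Since $a_2 = \gamma^2 - \alpha\beta$ by \eqref{eq:pushpara}, multiplying by $\prod_{i \notin I_2} a_{t_i}$ yields precisely $\prod_{i=1}^k a_{t_i}$.

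The step I expect to require the most care is the bookkeeping justifying that the column-swap signs multiply independently across the sites of $I_2$: the rows $x^{2i-1} = W_{v^i}$, $x^{2i} = S_{v^i}$ are held fixed throughout, and the branch choice at distinct sites affects disjoint pairs of columns, so the signs are genuinely multiplicative. Once this is established, the final identification with the kernel of Theorem~\ref{thm:main} is immediate by comparing \eqref{eq:defM} with \eqref{eq:gdef1}--\eqref{eq:gdef3}, which shows $[M(w_1,w_2)^{-1}]_{c_1,c_2} = \Delta(w_1,w_2)^{-1}\, g(w_1,w_2,c_1,c_2)$ and rewrites $L$ in the form \eqref{eq:Ldef0}.
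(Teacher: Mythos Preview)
Your proposal is correct and follows essentially the same route as the paper's own proof: pushforward to $P$, split each $U_2(v^i)$ into its non-crossing and crossing branches, apply Theorem~\ref{thm:localcorrGS} to each of the $2^{|I_2|}$ resulting intersections, use the column-swap observation to pick up a sign, and collapse the sum via the binomial identity $\sum_{S}(-\alpha\beta)^{|S|}\gamma^{2(|I_2|-|S|)} = (\gamma^2-\alpha\beta)^{|I_2|} = a_2^{|I_2|}$. Your remark that $[M(w_1,w_2)^{-1}]_{c_1,c_2} = \Delta(w_1,w_2)^{-1} g(w_1,w_2,c_1,c_2)$ is in fact more precise than the paper's phrasing in the theorem statement.
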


\begin{proof}

Immediately, let us use Equation~\ref{eq:defU} to write that
\begin{equation*}
    \mathbb{P} \left( \bigcap_{i=1}^k V_{t_i}(v^i) \right) = P\left( \bigcap_{i=1}^k U_{t_i}(v^i) \right).
\end{equation*}

Let us write
\begin{align*}
U_2(v) = U_2^0(v) \cup U_2^1(v),
\end{align*}
where 
\begin{align*}
U_2^0(v) &:= \{ \orho (W_v) = N_v \} \cap \{ \orho (S_v) = E_v \}, \\
U_2^1(v) &:= \{ \orho (W_v) = E_v \} \cap \{ \orho (S_v) = N_v \},
\end{align*}
are both events for the generalised snake configuration $\orho$. Now define
\begin{align*}
T := \{ 1 \leq j \leq k : t_j = 2 \}.
\end{align*}
By disjointness, we may then write
\begin{align*}
P\left(\bigcap_{i=1}^k U_{t_i}(v^i)\right)
= \sum_{\phi\colon T\to\{0,1\}}P\left(\bigcap_{i\in T} U^{\phi(i)}_{2}(v^i)\;\cap\; \bigcap_{i\notin T} U_{t_i}(v^i)\right),
\end{align*}
which is a sum over all the $2^{\# T}$ functions from $\phi:T \to \{0,1\}$.  

Fix a function $\phi \colon T \to \{0,1\}$. Let us use the shorthand $U_\phi = \bigcap_{i\in T} U^{\phi(i)}_{2}(v^i)\;\cap\; \bigcap_{i\notin T} U_{t_i}(v^i)$. This intersection is precisely the set such that, for any $\orho \in U_\phi$, for each $i = 1,\dots,k$, the values of $\orho(S_{v^i})$ and $\orho(W_{v^i})$ are determined. Explicitly, if we fix an arbitrary $\orho_0 \in U_\phi$, and define
\begin{equation*}
x^{2i-1} = W_{v^i}, \quad  x^{2i} = S_{v^i}, \quad y^{2i-1}_\phi = \orho_0 (W_{v^i}), \quad  y^{2i}_\phi = \orho_0 (S_{v^i}),
\end{equation*}
then the set $U_\phi$ is precisely given by,
\begin{equation*}
    U_\phi = \bigcap_{i=1}^{2k}\{\orho (x^i) = y^i_\phi\}.
\end{equation*}

We emphasise that the function $\phi:T \to \{0,1\}$ affects the choice of $y^{2i-1}_\phi$ and $y^{2i}_\phi$ whenever $i \in T$. Indeed, for $i \in T$ we have
\begin{align*}
\phi(i) = 0 &\implies y^{2i-1}_\phi= N_{v^i}, \quad  y^{2i}_\phi = E_{v^i}\\
\phi(i) = 1 &\implies y^{2i-1} = E_{v^i}, \quad  y^{2i} = N_{v^i}.
\end{align*}
Note that in the case that $\phi(i) = 0$ for all $i \in T$, the variables $\{x^1,\dots,x^{2k},y^1_\phi,\dots,y^{2k}_\phi\}$ coincide with those defined in the statement of the theorem. 

We are now in a position to apply Theorem~\ref{thm:localcorrGS} with $2k$ in place of $k$. Given our points $y^1_\phi, \dots, y^{2k}_\phi$ associated with our function $\phi:T \to \{0,1\}$, we have 
\begin{align}\label{eq:P(Uphi)}
P \left( U_\phi \right) &= P\left(\bigcap_{i=1}^{2k}\{\orho (x^i) = y^i_\phi\}\right) \nonumber\\
&= \prod_{i=1}^{2k} K_{00}(x^i,y^i) \det_{i,j=1}^{2k} L(x^i,y^j_\phi). 
\end{align} 
But now, we can see that 
\begin{align*}
\prod_{i=1}^{2k} K_{00}(x^i,y^i_\phi) = (\gamma^2)^{ \# \{ j \in T : \phi(j) = 0 \} } (\alpha \beta)^{ \# \{ j \in T : \phi(j) = 1 \} } \prod_{j \notin T} a_{t_j}.
\end{align*}
Moreover, swapping the value of $\phi(i)$ from $0$ to $1$ simply swaps two columns of the determinant in Equation~\eqref{eq:P(Uphi)}. In other words,
\begin{align*}
P \left( U_\phi \right) = (-\alpha\beta/\gamma^2)^{ \# \{ j \in T : \phi(j) = 1 \} }  P \left( U_{\phi_0} \right).
\end{align*} 
where $\phi_0(i) = 0$ for all $i \in T$. 

Hence, summing over all functions $\phi$, we have 
\begin{align*}
\sum_{ \phi :T \to \{0,1\} } P \left( U_\phi \right) &= \left(1-\frac{\alpha \beta}{\gamma^2} \right)^{\# T}  P \left( U_{\phi_0} \right)\\
&= \left(1-\frac{\alpha \beta}{\gamma^2} \right)^{\# T} \prod_{j \in T} (\gamma^2) \prod_{j \notin T} a_{t_j} \det_{i,j=1}^{2k} L(x^i,y^j). 
\end{align*} 
The result now follows from the fact that $a_2 = \gamma^2 - \alpha \beta$. 
\end{proof} 

Let us conclude by considering the case $k=1$, which describes the probabilities associated with a single vertex. In other words, we will prove Theorem~\ref{thm:frequency}.

\begin{proof}[Proof of Theorem~\ref{thm:frequency}]

By translation invariance, we may assume without loss of generality that our vertex $v$ lies at $\mathbf{0} = (0,0)$ in $\tor$. Now by setting $k=1$ in Theorem \ref{thm:premain} we have 
\begin{align*}
\mathbb{P}( V_{t}(\mathbf{0}) ) = a_t \det_{i,j=1}^2 L(x^i,y^j),
\end{align*}
where
\begin{align*}
x^1 = W_\mathbf{0} = (-1/2,0) \quad \text{and} \quad x^2 = S_\mathbf{0} = (0,-1/2),
\end{align*}
and 
\begin{align*}
\tau = 1 &\implies y^{1} = W_{\mathbf{0}} \quad \text{and} \quad y^{2} = S_{\mathbf{0}}\\
\tau = 2 &\implies y^{1} = N_{\mathbf{0}} \quad \text{and} \quad y^{2} = E_{\mathbf{0}}\\
\tau = 3 &\implies y^{1} = W_{\mathbf{0}} \quad \text{and} \quad y^{2} = N_{\mathbf{0}}\\
\tau = 4 &\implies y^{1} = E_{\mathbf{0}} \quad \text{and} \quad y^{2} = S_{\mathbf{0}}\\
\tau = 5 &\implies y^{1} = W_{\mathbf{0}} \quad \text{and} \quad y^{2} = E_{\mathbf{0}}\\
\tau = 6 &\implies y^{1} = N_{\mathbf{0}} \quad \text{and} \quad y^{2} = S_{\mathbf{0}}.
\end{align*}
Recall the notation,
\begin{align*}
\mathrm{d}\mathbf{w} := \frac{ \mathrm{d}w_1}{2 \pi \iota w_1} \frac{ \mathrm{d}w_2}{2 \pi \iota w_2} \frac{ \mathrm{d}\tilde{w}_1}{2 \pi \iota\tilde{w}_1} \frac{ \mathrm{d}\tilde{w}_2}{2 \pi \iota \tilde{w}_2}. 
\end{align*}
Now, changing the order of integration and taking determinants, we have
\begin{align*}
a_t\det_{i,j=1}^{2}L(x^i,y^j) &= a_t\det_{i,j=1}^{2} \left( \int_{(S^1)^2} \frac{dw_1}{2\pi i w_1}\,\frac{dw_2}{2\pi i w_2}\,w_1^{y^j_1 -x^i_1 }w_2^{y^j_2 - x^i_2}\big[M(w_1,w_2)^{-1}\big]_{c(x^i),\,c(y^j)}\right) \\[2pt]
&= \int_{(S^1)^{4}} 
\scalebox{0.9}{$
a_t\det \begin{bmatrix}
w_1^{y^1_1-x^1_1}w_2^{y^1_2-x^1_2}M(w_1,w_2)^{-1}_{c(x^1),c(y^1)} &  w_1^{y^2_1-x^1_1}w_2^{y^2_2-x^1_2}M(w_1,w_2)^{-1}_{c(x^1),c(y^2)} \\
\tilde{w}_1^{y^1_1-x^2_1}\tilde{w}_2^{y^1_2-x^2_2}M(\tilde{w}_1,\tilde{w}_2)^{-1}_{c(x^2),c(y^1)} & \tilde{w}_1^{y^2_1-x^2_1}\tilde{w}_2^{y^2_2-x^2_2}M(\tilde{w}_1,\tilde{w}_2)^{-1}_{c(x^2),c(y^2)}
\end{bmatrix}
$}
\mathrm{d}\mathbf{w}\\[2pt]
 &=: \int_{(S^1)^4}  \frac{1}{\Delta \tilde{\Delta}}	a_t \det R_t(\mathbf{w}) \mathrm{d}\mathbf{w},
\end{align*}
where we are using the obvious shorthand $\Delta := \Delta(w_1,w_2)$ and $\tilde{\Delta} := \Delta(\tilde{w}_1,\tilde{w}_2)$. There are six matrices $\{R_t(\mathbf{w}): 1 \leq t \leq 6 \}$ whose determinants we now calculate using the definitions of $x^1,x^2,y^1,y^2$, and the definition of $M(w_1,w_2)^{-1}$, which we recall from Equation~\eqref{eq:defM}. We may also define \begin{equation*}
    f_t(\mathbf{w}) = a_t\det R_t(\mathbf{w}).
\end{equation*} 
We recall from the introduction (in particular, Equations \eqref{eq:vertpoly1} to \eqref{eq:vertpoly6}) that we called $\{f_t(\mathbf{w}):1 \leq t \leq 6\}$ the vertex polynomials. It is a simple calculation, which we will demonstrate shortly, to ensure that our definitions here coincide with the previously claimed forms.

When $t = 1$, we have $y^1 = z^1 = W_\mathbf{0}$, which are black, and $y^2 = z^2 = S_\mathbf{0}$, which are white. In particular,
\begin{align*}
R_1(\mathbf{w}) &=  \begin{bmatrix}
1+\beta w_2  & - \gamma w_1^{1/2}w_2^{1/2} (  w_1^{1/2} w_2^{-1/2} ) \\
- \gamma \tilde{w}_1^{1/2}\tilde{w}_2^{1/2} ( \tilde{w}_1^{-1/2} \tilde{w}_2^{1/2} )& 1 + \alpha \tilde{w}_1
\end{bmatrix}
= \begin{bmatrix}
1+\beta w_2  & - \gamma w_1 \\
- \gamma \tilde{w}_2 & 1 + \alpha \tilde{w}_1
\end{bmatrix},
\end{align*}
so
\begin{equation*}
    \mathbb{P}(V_1(\mathbf{0})) =\int_{(S^1)^4}\frac{1}{\Delta \tilde{\Delta}} f_1(\mathbf{w}) \mathrm{d}\mathbf{w}
\end{equation*}
where, because $a_1 = 1$,
\begin{equation*}
    f_1(\mathbf{w}) = (1 + \beta w_2)(1 + \alpha \tilde{w}_1) - \gamma^2 w_1 \tilde{w}_2.
\end{equation*}
Similar calculations for $t = 2$ through $6$ yield the values in Table~\ref{tab:vertpolys}, which all agree with the vertex polynomials as claimed. 

\end{proof}

\begin{table}[!htb]
\vspace{12pt}
\begin{center}
\begin{tabular}{c@{\hspace{20pt}}c@{\hspace{20pt}}c}
    $t$ & $R_t(\mathbf{w})$ & $f_t(\mathbf{w})$\\[12pt]
    $1$ & 
    $\begin{bmatrix}
    1+\beta w_2  & - \gamma w_1 \\
    - \gamma \tilde{w}_2 & 1 + \alpha \tilde{w}_1
    \end{bmatrix}$ 
    & $(1 + \beta w_2)(1 + \alpha \tilde{w}_1) - \gamma^2 w_1 \tilde{w}_2$\\[12pt]
    $2$ & 
    $\begin{bmatrix}
    - \gamma w_1 w_2 & w_1 ( 1 + \beta w_2)  \\
    \tilde{w}_2 ( 1 + \alpha \tilde{w}_1) & - \gamma \tilde{w}_1 \tilde{w}_2
    \end{bmatrix}$
    & $(\gamma^2 - \alpha \beta)\left(\gamma^2 \tilde{w}_1 w_2 - (1+\alpha \tilde{w}_1)(1+\beta w_2)\right)w_1 \tilde{w}_2$\\[12pt]
    $3$ & 
    $\begin{bmatrix}
    1 + \beta w_2 & -\gamma w_1 w_2  \\
    -\gamma \tilde{w}_2  & \tilde{w}_2(1 + \alpha \tilde{w}_1)
    \end{bmatrix}$ 
    & $\beta\tilde{w}_2\left((1+\beta w_2)(1+\alpha \tilde{w}_1) - \gamma^2 w_1 w_2\right)$\\[12pt]
    $4$ & 
    $\begin{bmatrix}
    (1 + \beta w_2)w_1 & -\gamma w_1  \\
    -\gamma \tilde{w}_1 \tilde{w}_2  & 1 + \alpha \tilde{w}_1
    \end{bmatrix}$ 
    & $\alpha w_1\left((1+\beta w_2)(1+\alpha \tilde{w}_1) - \gamma^2 \tilde{w}_1 \tilde{w}_2\right)$ \\[12pt]
    $5$ &
    $\begin{bmatrix}
    1 + \beta w_2 & (1 + \beta w_2)w_1 \\
    -\gamma \tilde{w}_2 & -\gamma \tilde{w}_1 \tilde{w}_2
    \end{bmatrix}$ 
    & $\gamma^2 \tilde{w}_2(1+\beta w_2)(w_1 - \tilde{w}_1)$\\[12pt]
    $6$ &$\begin{bmatrix}
    -\gamma w_1 w_2 & -\gamma w_1 \\
    (1+\alpha \tilde{w}_1)\tilde{w}_2 & 1+\alpha \tilde{w}_1
    \end{bmatrix}$
    & $\gamma^2 w_1(1+\alpha \tilde{w}_1)(\tilde{w}_2 - w_2)$ \\
\end{tabular}
\end{center}
\caption{The matrices $R_t(\mathbf{w})$ and vertex polynomials $f_t(\mathbf{w})$ for each vertex type $t$.}
\label{tab:vertpolys}
\end{table}
\vspace{12pt}

One can and should check that $\sum_{t = 1}^6 f_t(\mathbf{w}) = \Delta \tilde{\Delta}$. This implies that
\begin{align*}
    \sum_{t=1}^6 \mathbb{P}(V_t(\mathbf{0})) = \sum_{t=1}^6\int_{(S^1)^4} \frac{1}{\Delta \tilde{\Delta}} f_t(\mathbf{w}) \ \mathrm{d}\mathbf{w}
    = \int_{(S^1)^4} \ \mathrm{d}\mathbf{w}
    &= 1,
\end{align*}
as expected.

\end{document}